\DeclareMathOperator{\Char}{Char}
\DeclareMathOperator{\re}{Re}
\DeclareMathOperator{\ad}{ad}
\DeclareMathOperator{\supp}{supp}
\newcommand{\dbar}{\ \ \mathchar'26\mkern-11mu d}
\newtheorem{theorem}{Theorem}
\newtheorem{proposition}{Proposition}
\newtheorem{lemma}{Lemma}
\newtheorem{corollary}{Corollary}
\newtheorem{remark}{Remark}
\newtheorem{conjecture}{Conjecture}
\def\jpopn#1#2{%
  \mathopen{%
    \setbox0=\hbox{$#1\langle$}%
    \setbox2=\hbox{%
            {\hbox{$#1\langle$}}%
            \kern -.6\wd0\box0%
    }%
            \box2%
  }%
}
\def\jpcls#1#2{%
  \mathclose{%
    \setbox0=\hbox{$#1\rangle$}%
    \setbox2=\hbox{%
            {\hbox{$#1\rangle$}}%
            \kern -.6\wd0\box0%
    }%
            \box2%
  }%
}
\def\tp{\ {}^{t}\kern -3pt}
\renewcommand{\thetheorem}{\arabic{theorem}}
\renewcommand{\theproposition}{\thesection.\arabic{proposition}}
\renewcommand{\thelemma}{\thesection.\arabic{lemma}}
\renewcommand{\thedefinition}{\thesection.\arabic{definition}}
\renewcommand{\thecorollary}{\thesection.\arabic{corollary}}
\renewcommand{\theequation}{\thesection.\arabic{equation}}
\renewcommand{\theremark}{\thesection.\arabic{remark}}
\begin{document}
%
%%%%%%%%%%%%%%%%%%%%%%%%%%%%%%%%%%%%%%%%%%%%%%%%%%%%%%%%%%%%%%%%%%%%%%%%%%%%%%%%%
%%%%%%%%%%%%%%%%%%%%%%%%%%%  definitions  %%%%%%%%%%%%%%%%%%%%%%%%%%%%%%%%%%%%%%%
%%%%%%%%%%%%%%%%%%%%%%%%%%%%%%%%%%%%%%%%%%%%%%%%%%%%%%%%%%%%%%%%%%%%%%%%%%%%%%%%%
\def\A {{\mathcal{A}}}
\def\D {{\mathcal{D}}}
\def\R {{\mathbb{R}}}
\def\N {{\mathbb{N}}}
\def\C {{\mathbb{C}}}
\def\Z {{\mathbb{Z}}}
\def\Q {{\mathbb{Q}}}
\def\phi{\varphi}
\def\epsilon{\varepsilon}
\def\kappa{\varkappa}
\def\tb#1{\|\kern -1.5pt | #1 \|\kern -1.5pt |}
%\def\ddb#1{[\kern -2.0pt [ #1 ]\kern -2.0pt ]}
%%%%%%%%%%%%%%%%%%%%%%%%%%%%%%%%%%%%%%%%%%%%%%%%%%%%%%%%%%%%%%%%%%%%%%%%%%%%%%%%%
%%%%%%%%%%%%%%%%%%%%%%%%%%%%%%%%%%%%%%%%%%%%%%%%%%%%%%%%%%%%%%%%%%%%%%%%%%%%%%%%%
%%%%%%%%%%%%%%%%%%%%%%%%%%%   setting toc depth    %%%%%%%%%%%%%%%%%%%%%%%%%%%%%%
%%%%%%%%%%%%%%%%%%%%%%%%%%%%%%%%%%%%%%%%%%%%%%%%%%%%%%%%%%%%%%%%%%%%%%%%%%%%%%%%%
%
\setcounter{tocdepth}{1}
%
%%%%%%%%%%%%%%%%%%%%%%%%%%%%%%%%%%%%%%%%%%%%%%%%%%%%%%%%%%%%%%%%%%%%%%%%%%%%%%%%%
%%%%%%%%%%%%%%%%%%%%%%%%%%%%%%%%%%%%%%%%%%%%%%%%%%%%%%%%%%%%%%%%%%%%%%%%%%%%%%%%%
%

\title[Treves Conjecture]{Analytic Hypoellipticity for Sums of
  Squares and the Treves Conjecture}
\author{Paolo Albano}
\address{Dipartimento di Matematica, Universit\`a
di Bologna, Piazza di Porta San Donato 5, Bologna
Italy}
\email{paolo.albano@unibo.it} 
\author{Antonio Bove}
\address{Dipartimento di Matematica, Universit\`a
di Bologna, Piazza di Porta San Donato 5, Bologna
Italy}
\email{bove@bo.infn.it}
\author{Marco Mughetti}
\address{Dipartimento di Matematica, Universit\`a
di Bologna, Piazza di Porta San Donato 5, Bologna
Italy}
\email{marco.mughetti@unibo.it}
\date{\today}

\begin{abstract}
We are concerned with the problem of real analytic regularity of the
solutions of sums of squares with real analytic coefficients. Treves
conjecture states that an operator of this type is analytic
hypoelliptic if and only if all the strata in the Poisson-Treves
stratification are symplectic. 

We produce a model operator, $ P_{1} $, having a single symplectic
stratum and prove that it is Gevrey $ s_{0} $ hypoelliptic and not
better. See Theorem \ref{th:1} for a definition of $ s_{0} $. We also
show that this phenomenon has a microlocal character. 

We point out explicitly that this is a counterexample to the
sufficient part of Treves conjecture and not to the necessary part,
which is still an open problem.
\end{abstract}
\subjclass[2010]{35H10, 35H20 (primary), 35B65, 35A20, 35A27 (secondary).}
\keywords{Sums of squares of vector fields; Analytic hypoellipticity;
  Treves conjecture}
\maketitle
%
%\tableofcontents

\section{Introduction}
\renewcommand{\thetheorem}{\thesection.\arabic{theorem}}

This paper is concerned with the real analytic regularity of the
distribution solutions to equations of the form
\begin{equation}
\label{eq:P}
P (x, D) u = \sum_{j=1}^{N} X_{j}(x, D)^{2} u = f ,
\end{equation}
where $ X_{j}(x, D) $ is a vector field with real analytic
coefficients defined in an open set $ \Omega \subset \R^{n} $, $ u $
is a distribution in $ \Omega $ and $ f \in C^{\omega}(\Omega) $, the
space of all real analytic functions in $ \Omega $. 

The problem of the $ C^{\infty}(\Omega) $ hypoellipticity of
\eqref{eq:P} has been solved completely by L. H\"ormander in 1967,
\cite{hormander67}, by proving that $ P $ is hypoelliptic if the
vector fields defining it verify the condition
\begin{itemize}
\item[(H)]{}
The Lie algebra generated by the vector fields and their commutators has
dimension $ n $, equal to the dimension of the ambient space.
\end{itemize}
We note in passing that M. Derridj, \cite{derridj71}, showed that when
the vector fields have real analytic coefficients condition (H) is
also necessary for $ C^{\infty} $ hypoellipticity. 

As a further step in studying the hypoellipticity properties of $ P $
one may ask if, when condition (H) is satisfied, it is analytic
hypoelliptic, i.e. if $P u = f$, $ f \in 
C^{\omega}(\Omega)$, for a certain distribution $ u \in
\mathscr{D}'(\Omega) $, implies that actually $u \in
C^{\omega}(\Omega)$. 

In 1972 M. S. Baouendi and C. Goulaouic produced an example of a sum
of squares satisfying condition (H)---and hence $ C^{\infty} $
hypoelliptic---which is not analytic hypoelliptic. Their model was
followed by another found almost simultaneously by O.~A.~Ole\u \i nik
and O.~A.~Ole\u \i nik and E.~V.~Radkevi\v c in \cite{o72},
\cite{ol-rad-73}, that is not analytic hypoelliptic and
satisfies condition (H). The difference between the two models is
important, even though at the time was not completely understood.

In 1978 F. Treves and in 1980 D. S. Tartakoff independently published the
papers \cite{treves78} and \cite{Tartakoff1980}, where, using
very different methods of proof, they proved the analytic
hypoellipticity for sums of squares that satisfy condition (H), vanish
to the exact order two on the characteristic variety, $ \Char(P) $,
when $ \Char(P) $ is actually a symplectic real analytic manifold.

In 1980 M\'etivier, following the ideas of F. Treves, showed that if
$P$ is an analytic 
(pseudo)differential operator of symbol $p(x, \xi) = p_{m} (x, \xi) +
p_{m - 1} (x, \xi) + \cdots$, where $p_{m}$ vanishes exactly to the
order $m$ on a symplectic real analytic manifold and $p_{m - j}$
vanishes at least to the order $(m - 2j)_{+}$ on the same manifold,
then, if $P$ is $ C^{\infty} $ hypoelliptic, it is also analytic
hypoelliptic. The vanishing conditions on the 
lower order terms are also called the Levi conditions. A few years
later Okaji, \cite{okaji}, generalized M\'etivier's work, arguing
along the same 
lines, to a case where the vanishing is anisotropic (but exact and
with Levi conditions) on a symplectic characteristic manifold.

These papers seemed to imply that a symplectic characteristic manifold
should be necessary for analytic hypoellipticity. 
Actually Treves in \cite{treves78} conjectured that ``if $ \Char(P) $,
assumed to be an analytic manifold, contains a smooth curve which is
orthogonal for the fundamental symplectic form to the whole tangent
plane to $ \Char(P) $ at every point (of the curve), the operator $ P
$ might not be analytic hypo-elliptic. Actually it is my belief that,
in this case, $ P $ is necessarily not so.''

In 1998 Hanges and Himonas, \cite{hh98}, observed that the Ole\u \i
nik and Radkevi\v c operator has a real analytic symplectic
characteristic manifold, and thus no Treves' curves, but is not
analytic hypoelliptic. Therefore the absence of Treves curves does not
imply analytic hypoellipticity. 

% But Ole\u \i nik and
% Radkevi\v c paper shows that there are operators having a symplectic
% characteristic manifold and being non analytic hypoelliptic.

To overcome this difficulty, in 1996, \cite{Treves}, F. Treves stated
a conjecture for the sums of squares of vector fields that takes into
account all the cases known to that date. The conjecture requires some
work to be stated precisely; see to this end the papers
\cite{Treves}, \cite{btreves}, \cite{trevespienza}. In what follows we
give a brief, sketchy account of how to formulate it.

Let $ P $ be as in \eqref{eq:P}. Then the characteristic variety of $
P $ is $ \Char(P) = \{ (x, \xi) \ | \ X_{j}(x, \xi) = 0, j=1, \ldots,
N \} $. This is a real analytic variety and, as such, it can be
stratified in real analytic manifolds, $ \Sigma_{i} $, for $ i $ in a
family of indices, having the property that for $ i \not = i' $,
either $ \Sigma_{i} \cap \overline{\Sigma}_{i'} = \varnothing $ or, if $
\Sigma_{i} \cap \overline{\Sigma}_{i'} \not = \varnothing $, then $ \Sigma_{i}
\subset \partial \Sigma_{i'} $. We refer to \cite{treves-book} for
more details.

Next one examines the rank of the restriction of the
symplectic form to the analytic strata $ \Sigma_{i} $. If there is a
change of rank of the symplectic form on a stratum, we may add to the
equations of the stratum the equations of the subvariety where there
is a change in rank and stratify the so obtained variety into strata
which are real analytic manifolds where the symplectic form has
constant rank.  

In the final step one considers the multiple Poisson brackets of the
symbols of the vector fields. Denote by $ X_{j}(x, \xi) $ the symbol
of the $ j $-th vector field. Let $ I = (i_{1}, i_{2}, \ldots, i_{r})
$, where $ i_{j} \in \{1, \ldots, N\} $. Write $ |I| = r $ and define
$$ 
X_{I}(x, \xi) = \{ X_{i_{1}}(x, \xi)  , \{ X_{i_{2}}(x, \xi)  , \{
\cdots \{ X_{i_{r-1}}(x, \xi), X_{i_{r}}(x, \xi) \} \cdots \} \} \}.
$$
$ r $ is called the length of the multiple Poisson bracket $ X_{I}(x,
\xi) $. For each stratum previously obtained, say $ \Sigma_{ik} $, we
want that all brackets of length lesser than a certain integer, say $
\ell_{ik} $ vanish, but that there is at least one bracket of length $
\ell_{ik} $ which is non zero on $ \Sigma_{ik} $. One can show that
this makes sense and defines a stratification.

Thus the strata obtained are real analytic manifolds where the
symplectic form has constant rank and where all brackets of the vector
fields vanish if their length is $ < \ell_{ik} $, and there is at
least one microlocally elliptic bracket of length $ \ell_{ik} $, $
\ell_{ik} $ depending on the stratum. $ \ell_{ik} $ is also called the
depth of the stratum.

We now state Treves' conjecture:
\begin{conjecture}[Treves, \cite{Treves}]
The operator $ P $ in \eqref{eq:P} is analytic hypoelliptic if and
only if every stratum in the above described stratification is
symplectic.
\end{conjecture}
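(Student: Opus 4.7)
The plan is to separate the biconditional into its two implications and treat each with microlocal analytic tools: the analytic wave front set, the FBI transform of Bros--Sj\"ostrand, and analytic pseudodifferential calculus.

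For the necessity direction (analytic hypoellipticity implies every stratum is symplectic), I would argue by contrapositive. Suppose some stratum $\Sigma_{ik}$ fails to be symplectic; then at some point $\rho_{0} \in \Sigma_{ik}$ the restriction of the symplectic form has a nontrivial radical tangent to $\Sigma_{ik}$, producing a smooth ``Treves curve'' $\gamma$ along which the multiple brackets $X_{I}$ remain degenerate in the transverse direction. Following the strategy used for the Baouendi--Goulaouic and Ole\u{\i}nik--Radkevi\v{c} operators, I would construct a formal WKB quasi-mode $u \sim \sum_{k} u_{k}(x) \, e^{i\lambda \varphi(x)}$ concentrated microlocally on $\gamma$, solving the eikonal and transport equations along the null bicharacteristic associated with the radical direction, and then Borel-sum the formal series in a suitable Gevrey class to produce a genuine distribution solution of $Pu = f$ with $f \in C^{\omega}$ whose analytic wave front set meets $\rho_{0}$, contradicting analytic hypoellipticity.

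For the sufficiency direction (all strata symplectic implies analytic hypoellipticity), I would induct on the depth of the stratification, working outward from the open stratum (where $P$ is analytic elliptic) into the deepest layers. Near a point of a symplectic stratum $\Sigma_{ik}$ of depth $\ell_{ik}$, I would apply an analytic Darboux normal form to bring $\Sigma_{ik}$ to a canonical model, then construct a microlocal analytic parametrix using M\'etivier--Okaji a priori estimates: the symplectic hypothesis ensures that the transverse quadratic (or anisotropic) model of $P$ along $\Sigma_{ik}$ has no nontrivial null direction, while the presence of a nonvanishing bracket of length $\ell_{ik}$ gives a subelliptic gain that can be bootstrapped to analytic regularity through a Tartakoff--Treves type iteration of weighted $L^{2}$ estimates with factorial control on the derivatives. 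Patching the microlocal parametrices via an analytic partition of unity subordinate to the stratification, and using the closure relation $\Sigma_{i} \subset \partial \Sigma_{i'}$ to propagate analytic regularity from shallower to deeper strata, would give the global conclusion.

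The main obstacle, by a wide margin, is the sufficiency direction at large depth. The Tartakoff--Treves--M\'etivier--Okaji machinery was designed for depth two, or for anisotropic but exact vanishing of a very specific type with Levi conditions built in, and extending it to arbitrary $\ell_{ik} \geq 3$ forces one to control how the higher commutators of the $X_{j}$ interact with the symplectic geometry of $\Sigma_{ik}$. One must check that no hidden Levi-type obstruction is generated by the deeper brackets; and it is precisely here that the conjecture is in danger, since a symplectic stratum of sufficiently large depth can carry a purely Gevrey microlocal singularity that is invisible to the bracket count. This is the crux which the authors confront with their operator $P_{1}$, showing that the sufficient part as stated cannot be proved in this form.
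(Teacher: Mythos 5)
There is a fundamental problem here: the statement you are trying to prove is a \emph{conjecture}, and the paper does not prove it --- on the contrary, the paper's main result refutes the sufficiency half. Theorem \ref{th:1} shows that the operator $P_{1}$ of \eqref{eq:P1}, whose Poisson--Treves stratification consists of the \emph{single symplectic stratum} $\Sigma_{1}=\Char(P_{1})$, is Gevrey $s_{0}$ hypoelliptic and \emph{not better}, where $\frac{1}{s_{0}}=\frac{1}{r}+\frac{r-1}{r}\,\frac{p-1}{q-1}<1$ whenever $1<r<p<q$; hence $P_{1}$ is not analytic hypoelliptic, and Corollary \ref{cor:1} states explicitly that the sufficient part of the conjecture fails both locally and microlocally. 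Consequently your sufficiency argument cannot be repaired: the induction on depth with an analytic Darboux normal form and M\'etivier--Okaji estimates presupposes exact (isotropic or anisotropic) vanishing with Levi conditions on the symplectic stratum, and $P_{1}$ is designed precisely so that these hypotheses fail --- the two transversal directions $x_{1}$, $x_{2}$ degenerate at different rates ($r$ versus $p,q$), and the optimal exponent $s_{0}$ is produced by the interplay of these scales, an effect invisible to the bracket count and to the symplectic geometry of the stratum. You do acknowledge this danger in your final paragraph, but that acknowledgement contradicts the claim that the proposed argument proves the biconditional.

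The necessity direction is also not available: the paper states it is still an open problem, and your sketch (a WKB quasi-mode concentrated on a Treves curve inside a non-symplectic stratum, Borel-summed in a Gevrey class) is exactly the kind of construction that is known only in special models such as Baouendi--Goulaouic or Ole\u{\i}nik--Radkevi\v{c}; no general eikonal/transport scheme along Hamilton leaves of an arbitrary non-symplectic stratum has been carried out, and nothing in the paper supplies one. The correct reading of the paper is therefore not a proof of the conjecture but a counterexample to its sufficient part (Sections 3 and 4: an explicit solution $A(u)$ built from eigenfunctions of a semiclassical double-well Schr\"odinger operator, showing optimality of $s_{0}$, plus a subelliptic-estimate argument showing $G^{s_{0}}$ hypoellipticity), together with the observation that the necessary part remains open.
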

We remark that the above statement is in agreement with a number of
known results and that no counterexamples are known. 
Baouendi-Goulaou\-ic operator does not have a symplectic
characteristic manifold and so one might expect it not to be analytic
hypoelliptic. In fact one can prove that it propagates Gevrey $ s $
singularities along the Hamilton leaves of the characteristic
manifold, for $ 1 \leq s < 2 $. 

Consider the Ole\u \i nik-Radkevi\v c operator
\begin{equation}
\label{eq:ol-rad-op}
D_{1}^{2} + x_{1}^{2(p-1)} D_{2}^{2} + x_{1}^{2(q-1)} D_{3}^{2},
\end{equation}
where $ 1 < p \leq q $. It is straightforward to see that its
characteristic manifold is the manifold $ \{ (x, \xi) \ | \ x_{1} =
\xi_{1} = 0, (\xi_{2}, \xi_{3}) \neq (0, 0) \} \subset T^{*}\R^{3}
\setminus \{0\} $. It is also evident that the latter is a symplectic
submanifold of the cotangent bundle. Here we need to look closely at
the Poisson brackets: let us consider $ \{ X_{1}, \{ X_{1} , \{ \ldots \{
X_{1}, X_{2}\} \ldots \}\}\} = \ad_{X_{1}}^{p-1}X_{2} = (p-1)! \xi_{2}
$. The latter is not elliptic in a conic neighborhood of the point $
(0, 0, 0; 0, 0, 1) $. Hence, if $ p < q $, the strata are given by  $
\{ (x, \xi) \ | \ x_{1} = 
\xi_{1} = 0, \xi_{2} \neq 0 \}  $ and $ \{ (x, \xi) \ | \ x_{1} =
\xi_{1} = 0, \xi_{2} = 0, \xi_{3} \neq 0 \}  $, which is not
symplectic. 

Treves and Tartakoff theorems are also in agreement with
the conjecture, since in that case there is only one symplectic
stratum and the vanishing is exactly of order two.

We just would like to mention that a number of results have been
published over the last fifteen years in agreement with the
conjecture. As a non exhaustive and certainly incomplete list we
mention the papers \cite{christ}, \cite{ch09}, \cite{costin-costin},
\cite{grigis-sjostrand}, \cite{grushin}, \cite{hh}, \cite{hoshiro} as
well as \cite{AB}, \cite{BT}.

\bigskip
In the present paper we exhibit an operator whose stratification has
just a single symplectic stratum and we show that the operator is
Gevrey $ s_{0} $ hypoelliptic and that this is the optimal Gevrey
regularity (see Theorem \ref{th:1} for a precise definition of $ s_{0}
$.)

As a consequence we get that a symplectic stratification does not
imply analytic hypoellipticity. In our opinion
this prompts for a revision of the sufficient part of the conjecture,
mainly where the Poisson brackets are involved.

The necessary part of the conjecture, as far as we know, is still an
open problem: If there is a non symplectic stratum, so that
Hamilton leaves appear, then the operator $ P $ is not analytic
hypoelliptic. 

Here is the structure of the paper. In Section 2 we state the result
for two operators: one having a single simplectic stratum and the
other having a stratification where the less deep stratum is
symplectic. For the first operator we get a local Gevrey regularity,
while for the second the regularity is only at a microlocal level. See
\cite{trevespienza} for a microlocal statement of the conjecture. 

Section 3 is devoted to the proof of the optimality of the $ s_{0} $
Gevrey regularity. We construct a solution to $ P_{1}u = 0 $ which is
not better than Gevrey $ s_{0} $. The solution we construct has a complex phase
function (and this is a well known phenomenon) as well as an
amplitude. To obtain the amplitude we have to discuss a semiclassical
spectral problem for a stationary Schr\"odinger equation with a
symmetric double well potential depending on two parameters.

It is known that, since the bottom of the well is quadratic, for very
small values of the Planck constant $ h $ the eigenvalues, which are
simple and positive, behave like the eigenvalues of a harmonic
oscillator. This fact allows us to obtain an amplitude function that does
not interfere with the phase.

In Section 4 we prove that every solution to $ P_{1} u = f $ is Gevrey
$ s_{0} $, if $ f \in G^{s_{0}} $. This is done using the subelliptic
estimate for the operator. 
Very likely one could have used different techniques, like
the FBI transform (see e.g. \cite{Sj-83}, \cite{Sj-Ast} and
\cite{AB}), but we thought that using the subelliptic estimate
would give the most elementary and readable proof.

Finally we collected in Appendix A the proof for a number of $
L^{\infty} $ estimates for the eigenfunctions of the Schr\"odinger
equation with a double well potential (actually they hold true in more
general cases) and in Appendix B the proof of a technical lemma that
is used in Section 4.

\bigskip
\noindent
\textbf{Acknowledgements.}
One of the authors (A. B.) would like to thank F.~Tre\-ves and
P.~Cordaro for a number of useful discussions.

\section{Statement of the Result}
\setcounter{equation}{0}
\setcounter{theorem}{0}
\setcounter{proposition}{0}
\setcounter{lemma}{0}
\setcounter{corollary}{0}
\setcounter{definition}{0}
\setcounter{remark}{0}

Let $ r $, $ p $, $ q \in \N $, $ 1 < r < p < q $, and $ x = (x_{1},
\ldots, x_{4}) \in \R^{4} $. The object of this
section is to state the optimal Gevrey regularity result for the two
operators
\begin{equation}
\label{eq:P1}
P_{1} (x, D) = D_{1}^{2} + D_{2}^{2} + x_{1}^{2(r-1)}\left(D_{3}^{2} +
  D_{4}^{2}\right) + x_{2}^{2(p-1)} D_{3}^{2} + x_{2}^{2(q-1)}
D_{4}^{2} ,
\end{equation}
\begin{equation}
\label{eq:P2}
P_{2} (x, D) = D_{1}^{2} + D_{2}^{2} + x_{1}^{2(r-1)}D_{4}^{2} +
x_{2}^{2(p-1)} D_{3}^{2} + x_{2}^{2(q-1)} D_{4}^{2} .
\end{equation}
First of all we remark that both $ P_{1} $ and $ P_{2} $ are sums of
squares of vector fields with real analytic coefficients satisfying
H\"ormander bracket condition, i.e. the whole ambient space is
generated when we take iterated commutators of the vector fields in
the definition of $ P_{j} $. In particular both $ P_{1} $ and $ P_{2}
$ are $ C^{\infty} $ hypoelliptic at the origin. This means that there
exists an open neighborhood of the origin, $ \Omega $, such that for
every open set $ V \Subset \Omega $, $ 0 \in V $, we have, for $j =1, 2$,
$$ 
P_{j} u \in C^{\infty}(V) \Rightarrow u \in C^{\infty}(V),
$$
for every distribution $ u \in \mathscr{D}'(\Omega) $.

The characteristic manifold of $ P_{1} $ is the real analytic manifold 
\begin{multline}
\label{eq:charP1}
\Char(P_{1}) = \left\{ (x, \xi) \in T^{*}\R^{4} \setminus \{0\} \ | \ 
\right .
\\  
\left .
\xi_{1} = \xi_{2} = 0, x_{1} =  x_{2} = 0, \xi_{3}^{2} + \xi_{4}^{2}
  > 0 \right\}. 
\end{multline}
The characteristic manifold of $ P_{2} $ is the real analytic variety 
\begin{multline}
\label{eq:charP2}
\Char(P_{2}) = \left\{ (x, \xi) \in T^{*}\R^{4} \setminus \{0\} \ | \ 
\right .
\\  
\left .
\xi_{1} = \xi_{2} = 0, x_{2} = 0, x_{1} \xi_{4} = 0,  
\xi_{3}^{2} + \xi_{4}^{2} > 0 \right\}. 
\end{multline}
According to Treves conjecture one has to look at the strata
associated with $ P_{1} $ and $ P_{2} $. 

The stratification associated with $ P_{1} $ is made up of a
symplectic single stratum 
$$ 
\Sigma_{1} = \left\{ (0, 0, x_{3}, x_{4}; 0, 0, \xi_{3}, \xi_{4}) \ |
\xi_{3}^{2} + \xi_{4}^{2} > 0  \ \right\} = \Char(P_{1}).
$$
The stratification associated with $ P_{2} $ is more
complicated. Actually there are eight strata of the form
\begin{itemize}
\item[a -]{} 
$$ 
\Sigma_{1, \pm} = \left\{ (0, 0, x_{3}, x_{4}; 0, 0, \xi_{3}, \xi_{4})
  \ | \ \pm \xi_{4} > 0 \right\},
$$
at depth $ r $. This is a symplectic stratum and the restriction of
the symplectic form to $\Sigma_{1, \pm}  $ has rank 4.
\item[b -]{}
$$ 
\Sigma_{2, \pm, \pm} = \left\{ (x_{1}, 0, x_{3}, x_{4}; 0, 0, \xi_{3}, 0)
  \ | \ \pm \xi_{3} > 0, \pm x_{1} > 0 \right\},
$$
at depth $ p $. This is a non symplectic stratum and the restriction of
the symplectic form to $\Sigma_{2, \pm, \pm}  $ has rank 2.
\item[c -]{}
$$ 
\Sigma_{3, \pm} = \left\{ (0, 0, x_{3}, x_{4}; 0, 0, \xi_{3}, 0)
  \ | \ \pm \xi_{3} > 0 \right\},
$$
at depth $ p $. This is a non symplectic stratum and the restriction of
the symplectic form to $\Sigma_{3, \pm}  $ has rank 2. Note that 
$$
\Sigma_{3, \pm} \subset \partial \Sigma_{2, \pm, \pm} =
\overline{\Sigma_{2, \pm, \pm}} \setminus \Sigma_{2, \pm, \pm}.
$$
\end{itemize}
According to the conjecture we would expect local real analyticity
near the origin for the distribution solutions, $ u $,  of $ P_{1}u = f $, with
a real analytic right hand side. Moreover we would also expect that $
(x, \xi) \in {}^{c}WF_{a}(u) \cap \Sigma_{1, \pm} $, for any
distribution solution of $ P_{2}u = f $, provided $
(x, \xi) \in {}^{c}WF_{a}(f) \cap \Sigma_{1, \pm} $. Here $
{}^{c}WF_{a}(u) $ denotes the complement of the analytic wave front
set of $ u $ in $ T^{*}\R^{4} \setminus \{0\} $.

We are ready to state the theorem that is proved in the next section.
\begin{theorem}
\label{th:1}
Let 
$$ 
\frac{1}{s_{0}} = \frac{1}{r} + \frac{r-1}{r} \frac{p-1}{q-1}.
$$
Then
\begin{itemize}
\item[i) ]{}
$ P_{1} $ is locally Gevrey $ s_{0} $ hypoelliptic and not better near
the origin.
\item[ii)]{}
$ P_{2} $ is microlocally Gevrey $ s_{0} $ hypoelliptic and not better
at points in $ \Sigma_{1, \pm} $.  
\end{itemize}
\end{theorem}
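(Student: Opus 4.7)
I would split the argument into the upper bound (Gevrey $s_0$ hypoellipticity) and the optimality (sharpness of $s_0$), treating Part (ii) as a microlocal upgrade of Part (i). For the upper bound I would rely on the subelliptic a priori estimate that $P_1$ inherits from Hörmander's theorem: since the bracket depth on $\Sigma_1$ equals $r$, one has locally an estimate of the form $\|u\|_{1/r}^2 \leq C(\|P_1 u\|_0^2 + \|u\|_0^2)$. A naive iteration of this yields only Gevrey $r$ regularity, which is strictly weaker than the target $s_0 < r$; the improvement must come from the fact that each derivative $D_3, D_4$ can be reached via two competing bracket chains---either $r$ commutators with $X_1 = D_1$, or $p$ (resp.\ $q$) commutators with $X_2 = D_2$ weighted by powers of $x_2$. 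My plan is to apply the subelliptic estimate iteratively to cutoffs $\chi D^\alpha u$ and to combine, with optimal coefficients, the estimates produced by these two bracket routes so that the resulting factorial bounds take the form $(k!)^{s_0}$. The key combinatorial trade-off that produces precisely the formula $1/s_0 = 1/r + (r-1)(p-1)/(r(q-1))$ will be isolated in a technical lemma proved in Appendix B.

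For the optimality I would construct a distributional solution $u$ of $P_1 u = 0$ near the origin whose Gevrey regularity is exactly $s_0$. Because $P_1$ is translation-invariant in $(x_3, x_4)$, I look for $u$ in the form
\begin{equation*}
u(x) = \int e^{i(x_3 \eta + x_4 \tau)} v(x_1, x_2; \eta, \tau)\, \chi(\tau)\, d\tau,
\end{equation*}
where $v(\cdot; \eta, \tau)$ is a zero-energy $L^2$ eigenfunction of the parameter-dependent 2D Schrödinger operator
\begin{equation*}
\mathcal{L}(\eta, \tau) = -\partial_{x_1}^2 - \partial_{x_2}^2 + x_1^{2(r-1)}(\eta^2 + \tau^2) + x_2^{2(p-1)}\eta^2 + x_2^{2(q-1)}\tau^2.
\end{equation*}
For real $\eta$ the potential is non-negative, so a zero eigenvalue forces a complex phase: choosing $\eta = i\beta(\tau)$ with $\beta > 0$ and $\tau > \beta$ keeps the $x_1$-term confining, yet converts the $x_2$-term into the symmetric double well $\tau^2 x_2^{2(q-1)} - \beta^2 x_2^{2(p-1)}$ with quadratic minima at $x_{2,\pm} \sim (\beta^2/\tau^2)^{1/(2(q-p))}$. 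After separation of variables, requiring that the sum of one eigenvalue of the resulting $x_1$-anharmonic oscillator and one eigenvalue of the $x_2$ double well vanish determines $\beta$ as a function of $\tau$ and hence produces the complex phase.

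The main difficulty I anticipate is the semiclassical analysis of the $x_2$ double-well problem and the extraction of the exact Gevrey index. Rescaling $x_2$ so the wells sit at unit separation turns the $x_2$-Schrödinger operator semiclassical with an effective Planck constant $h(\tau) \to 0$; since the wells are quadratic at their minima, the low-lying eigenvalues behave like those of the two local harmonic oscillators up to exponentially small tunneling splittings, and the accompanying $L^\infty$ bounds on the eigenfunctions, collected in Appendix A, show that $v$ grows only polynomially in $\tau$. The decay of $u$ near the origin is then governed by $\im \Phi = x_3 \beta(\tau)$; balancing this against the polynomial amplitude and the cut-off $\chi(\tau)$ pins the Gevrey exponent down to exactly $s_0$, with $r, p, q$ entering through the scaling that balances the three potential terms of $\mathcal{L}$. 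Part (ii) is a microlocal adaptation of the same pair of arguments at a point $\rho_0 \in \Sigma_{1, \pm}$: since $\xi_4 \neq 0$ at $\rho_0$, the surviving term $x_1^{2(r-1)}D_4^2$ in $P_2$ plays the role of $x_1^{2(r-1)}(D_3^2 + D_4^2)$ at leading order in the eigenvalue problem, while the missing $x_1^{2(r-1)}D_3^2$ enters only as a positive perturbation of lower weight; both the subelliptic estimate and the complex-phase construction transfer once one microlocalizes in a conic neighborhood of $\rho_0$ disjoint from $\Sigma_{2, \pm, \pm} \cup \Sigma_{3, \pm}$.
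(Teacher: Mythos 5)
Your overall architecture matches the paper's: iterated subelliptic estimates for the upper bound, and for optimality a Fourier-type superposition with a complex $x_3$-frequency $\eta=i\beta(\tau)$ whose zero-eigenvalue condition for the separated Schr\"odinger operators (anharmonic oscillator in $x_1$, symmetric double well in $x_2$) determines $\beta(\tau)$, with the tunneling and $L^\infty$ bounds of Appendix A controlling the amplitude. However, there is a genuine gap in the step that actually produces the exponent $s_0$. Since $\beta(\tau)\sim \tilde z\,\tau^{\theta}$ with $\theta=1/s_0$ and $\tilde z>0$ (this is forced by the eigenvalue balance), the factor $e^{ix_3\eta}=e^{-x_3\beta(\tau)}$ grows like $e^{|x_3|\tilde z\tau^{\theta}}$ for $x_3<0$, so your integral does not converge in any full neighborhood of the origin; and at $x_3=0$, where you must evaluate the derivatives, $\im\Phi=x_3\beta(\tau)$ vanishes, so it cannot be what "governs the decay". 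The amplitude $v(0,0;\tau)$ is only bounded below by the tunneling estimate $e^{-C\tau^{\kappa}}$ with $\kappa=(\tfrac1r-\tfrac1q)\tfrac{q}{q-1}<\theta$ and above by a power of $\tau$, so $\int \tau^k v(0,0;\tau)\,d\tau$ diverges as written and in any case would not yield $k!^{s_0}$. The missing ingredient is an explicit damping factor $e^{-\tau^{\theta}}$ inserted into the phase (harmless, since it is $x$-independent and killed by $D_3$, $D_4$ only up to powers of $\tau$): it guarantees absolute convergence for $|x_3|$ small, dominates the subdominant tunneling factor $e^{-C\tau^{\kappa}}$, and is precisely what makes $(-D_{x_4})^kA(u)(0)\gtrsim \int e^{-c\tau^{\theta}}\tau^k\,d\tau\sim C^{k+1}k!^{s_0}$.

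On the upper bound, your plan of "combining two bracket routes with optimal coefficients" names the right heuristic but not the mechanism, and you misplace the key lemma: Appendix B is a pseudodifferential commutator estimate for $[\,\omega_N(N^{-1}D)D^{1/r},\phi_N\,]$, not a combinatorial trade-off. The actual engine is this: after localizing with Ehrenpreis cutoffs $\phi_N\chi_N(N^{-1}D_4)D_4^N$, the commutator terms produce factors $x_2^{p-1}\phi_N'$ that cannot be absorbed by pulling back a whole $D_4$; one must instead spend the $1/r$-Sobolev gain of the subelliptic estimate, i.e.\ pull back $D_4^{1/r}$ (whence the need for the Appendix~B lemma), and each such step reproduces another factor $x_2^{p-1}$. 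Only when the accumulated power of $x_2$ reaches $q-1$ can a full derivative be retrieved. The bookkeeping of how many $1/r$-gains versus full derivatives are spent, under the constraints $0\le a_5(p-1)-b(q-1)-a_2<q-1$, is exactly what yields $N^{Ns_0}$ with $\tfrac1{s_0}=\tfrac1r+\tfrac{r-1}{r}\tfrac{p-1}{q-1}$. Without identifying this accumulation-and-release mechanism, the claim that the iteration closes at the exponent $s_0$ (rather than at $r$) is unsupported.
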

It is known that a sum of squares is always microlocally Gevrey $ s $
hypoelliptic for $ s \geq \sigma $, where $ \sigma $ denotes the
length of the shortest non zero Poisson bracket (see \cite{abc} for a
precise statement and a proof.) We point out that $ P_{j} $, $ j = 1,
2 $, are microlocally more regular than that.

We would like to explicitly remark that deducing the Gevrey regularity
from the geometry of the characteristic variety and/or of its
stratification seems beyond reach at this stage.

\medskip

Moreover we also have that 
\begin{theorem}
\label{th:2}
$ P_{2} $ is microlocally Gevrey $ p
$ hypoelliptic and not better at points in $ \Sigma_{2, \pm, \pm} \cup
\Sigma_{3, \pm}$.
\end{theorem}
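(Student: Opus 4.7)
The plan is to establish matching upper and lower bounds for the microlocal Gevrey regularity at the strata $\Sigma_{2,\pm,\pm}$ and $\Sigma_{3,\pm}$, both of which have depth $p$.

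\emph{Upper bound.} At any point of these strata, the length of the shortest non-vanishing iterated Poisson bracket of the symbols of the vector fields defining $P_{2}$ equals $p$: the shortest bracket that is microlocally elliptic is $\operatorname{ad}_{D_{2}}^{p-1}(x_{2}^{p-1}D_{3})$, while all brackets involving $D_{1}$ or $x_{1}^{r-1}D_{4}$ either vanish or have non-elliptic symbol because $x_{1}=0$ or $\xi_{4}=0$ on $\Sigma_{3,\pm}$ (and the analogous statements on $\Sigma_{2,\pm,\pm}$). The general microlocal Gevrey estimate for sums of squares recalled right after Theorem \ref{th:1} (and proved in \cite{abc}) therefore yields microlocal Gevrey $p$ hypoellipticity at every point of $\Sigma_{2,\pm,\pm}\cup\Sigma_{3,\pm}$, with no further work.

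\emph{Optimality.} The goal is to produce a distribution solution of $P_{2}u=0$ whose microlocal Gevrey regularity at some point of $\Sigma_{2,\pm,\pm}\cup\Sigma_{3,\pm}$ is exactly $p$. The key observation is that $\xi_{4}=0$ at every point of these strata, and that on distributions independent of $x_{4}$ the operator $P_{2}$ collapses to the Baouendi--Goulaouic-type operator
\[
\widetilde{P}=D_{1}^{2}+D_{2}^{2}+x_{2}^{2(p-1)}D_{3}^{2},
\]
in the three variables $(x_{1},x_{2},x_{3})$. It is classical that $\widetilde{P}$ admits a distribution $v(x_{1},x_{2},x_{3})$ satisfying $\widetilde{P}v=0$ near the origin which is Gevrey $p$ and not better, with analytic wave front set contained in $\{x_{1}=x_{2}=0,\ \xi_{1}=\xi_{2}=0,\ \xi_{3}\neq 0\}$. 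Viewing $v$ as a distribution on $\R^{4}$ independent of $x_{4}$ gives a solution to $P_{2}v=0$ whose analytic wave front set sits in $\Sigma_{3,\pm}$, thus establishing the optimality on that stratum. This construction is a simplified version of the one in Section 3: only a single scale $p$ is in play, so the delicate semiclassical double-well analysis of the amplitude is not needed, and a standard Baouendi--Goulaouic harmonic oscillator eigenfunction construction suffices.

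\emph{From $\Sigma_{3,\pm}$ to $\Sigma_{2,\pm,\pm}$.} Since $\Sigma_{3,\pm}\subset\partial\Sigma_{2,\pm,\pm}$, the microlocal Gevrey regularity of $v$ on $\Sigma_{3,\pm}$ obstructs Gevrey-$s$ regularity with $s<p$ at nearby points of $\Sigma_{2,\pm,\pm}$; equivalently, Gevrey-$s$ singularities with $s>p$ propagate along the Hamilton leaves of the non-symplectic stratum $\Sigma_{2,\pm,\pm}$, carrying the singularity into its interior. To make this precise at the Gevrey level one can alternatively construct a direct WKB-type solution centered at a base point of $\Sigma_{2,\pm,\pm}$: freezing $x_{1}$ at its non-zero base value turns $P_{2}$ into a perturbation of $D_{2}^{2}+x_{2}^{2(p-1)}D_{3}^{2}$ in which the $D_{4}^{2}$ terms are microlocally subprincipal (because $\xi_{4}=0$ to leading order), and the Baouendi--Goulaouic amplitude construction carries through.

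\emph{Main obstacle.} The only substantive difficulty is the microlocal bookkeeping for the optimality argument on $\Sigma_{2,\pm,\pm}$: one must either invoke a sufficiently sharp Gevrey propagation-of-singularities result along Hamilton leaves of a non-symplectic stratum, or carry out the direct WKB construction at a shifted base point with genuine, though minor, technical work. In contrast, the upper bound is essentially a quotation, and the optimality at $\Sigma_{3,\pm}$ reduces to the standard Baouendi--Goulaouic example in one dimension less.
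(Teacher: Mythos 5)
Your approach is essentially the same as the paper's, which is extremely terse: take solutions independent of $x_{4}$, observe that they solve the generalized Baouendi--Goulaouic operator $D_{1}^{2}+D_{2}^{2}+x_{2}^{2(p-1)}D_{3}^{2}$, and invoke the known Gevrey-$p$ optimality; the upper bound is quoted from \cite{abc}. One small inaccuracy in your write-up: a non-Gevrey-$p$ solution of the reduced Baouendi--Goulaouic operator does \emph{not} have analytic wave front set contained in $\{x_{1}=x_{2}=0\}$ --- since Gevrey singularities of that operator propagate along the Hamilton leaves of the non-symplectic characteristic manifold (the $x_{1}$-lines), the wave front set sweeps out an entire leaf, placing singular points in both $\Sigma_{2,\pm,\pm}$ (where $x_{1}\neq 0$) and $\Sigma_{3,\pm}$ (where $x_{1}=0$) at once. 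Consequently the entire ``From $\Sigma_{3,\pm}$ to $\Sigma_{2,\pm,\pm}$'' step --- the propagation-of-singularities argument and the alternative shifted WKB ansatz --- is redundant: the single BG solution already serves as a counterexample at both families of strata, which is why the paper treats them together with no further comment.
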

As a consequence of Theorem \ref{th:1} we have
\begin{corollary}
\label{cor:1}
The sufficient part of Treves conjecture does not hold neither locally
nor microlocally.
\end{corollary}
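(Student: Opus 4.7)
The plan is to deduce the corollary directly from Theorem~\ref{th:1}; the only substantive check is that $s_{0} > 1$, so that Gevrey $s_{0}$ regularity is strictly weaker than real analytic regularity. Once this is verified, the local half follows by applying part (i) to $P_{1}$ and the microlocal half by applying part (ii) to $P_{2}$.

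First, I would check that $s_{0} > 1$. From
\begin{equation*}
\frac{1}{s_{0}} = \frac{1}{r} + \frac{r-1}{r}\cdot\frac{p-1}{q-1}
\end{equation*}
and the hypothesis $1 < r < p < q$, one has $\frac{p-1}{q-1} \in (0,1)$, whence
\begin{equation*}
\frac{1}{s_{0}} < \frac{1}{r} + \frac{r-1}{r} = 1,
\end{equation*}
so $s_{0} > 1$. In particular, a distribution which is Gevrey $s_{0}$ and not better than that is \emph{not} real analytic.

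Next, for the local half I would observe that, as recorded just after \eqref{eq:charP1}, the Poisson--Treves stratification of $\Char(P_{1})$ consists of the single stratum $\Sigma_{1}$, which is symplectic. The sufficient part of Treves' conjecture thus predicts that $P_{1}$ is analytic hypoelliptic near the origin. However, Theorem~\ref{th:1}(i) produces a distribution $u$ with $P_{1} u \in C^{\omega}$ near $0$ whose precise Gevrey regularity is $s_{0}$, so by the previous paragraph $u \notin C^{\omega}$ near the origin, contradicting analytic hypoellipticity. For the microlocal half I would argue analogously with $P_{2}$: at any point $(x,\xi) \in \Sigma_{1,\pm}$, the stratification of $P_{2}$ is locally governed solely by the symplectic stratum $\Sigma_{1,\pm}$, so the microlocal sufficient part of the conjecture (in the formulation of \cite{trevespienza}) would require $(x,\xi) \in {}^{c}WF_{a}(u)$ whenever $(x,\xi) \in {}^{c}WF_{a}(P_{2} u)$; but Theorem~\ref{th:1}(ii) exhibits a solution violating exactly this implication, since the optimal microlocal regularity there is Gevrey $s_{0} > 1$ rather than analytic. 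The main (and essentially only) thing to be careful about is matching the counterexamples with the precise formulation of the microlocal conjecture used in \cite{trevespienza}; the remainder is arithmetic.
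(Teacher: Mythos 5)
Your argument is exactly the paper's (implicit) one: the paper states the corollary as a direct consequence of Theorem~\ref{th:1} without further proof, and the only thing to verify is the arithmetic $s_{0}>1$ (which follows from $p<q$), so that ``Gevrey $s_{0}$ and not better'' rules out analyticity at the single symplectic stratum $\Sigma_{1}=\Char(P_{1})$ for the local case and at $\Sigma_{1,\pm}$ for the microlocal case. Correct, and essentially identical in approach.
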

We note however that for a single symplectic stratum of codimension 2
and for a single symplectic stratum of dimension 2 the conjecture is
true. For the first result we refer to \cite{ch09}. The proof of the
second is contained in \cite{bm}.  
\begin{remark}
\label{rem:rev}
We observe that in $ P_{1} $ brackets with $ D_{1} $ and brackets with
$ D_{2} $ behave differently, the latter looking closer to what
happens for the Ole\u \i nik--Radkevi\v c operator
\eqref{eq:ol-rad-op}. The more so that $ P_{1} $ is analytic
hypoelliptic if and only if $ p = q $.

Define
\begin{multline*} 
\tilde{P}_{1}(x, D) = D_{1}^{2} + D_{2}^{2} + (x_{1} + x_{2})^{2(r-1)}\left(D_{3}^{2} +
  D_{4}^{2}\right) 
\\
+ (x_{1} - x_{2})^{2(p-1)} D_{3}^{2} + (x_{1} -
x_{2})^{2(q-1)} D_{4}^{2} .
\end{multline*}
$ \tilde{P}_{1} $ has the same properties as $ P_{1} $, since it
differs from $ P_{1} $ by a rotation and a dilation. However the
brackets w.r.t. both $ D_{1} $ and $ D_{2} $ have a similar behavior:
$ \ad_{D_{j}}^{r-1} (x_{1} + x_{2})^{r-1} D_{k} = (r-1)! D_{k} $, $
j=1, 2 $, $ k = 3, 4 $.
\end{remark}

\section{Proof of Theorem \ref{th:1} and \ref{th:2}}
\setcounter{equation}{0}
\setcounter{theorem}{0}
\setcounter{proposition}{0}
\setcounter{lemma}{0}
\setcounter{corollary}{0}
\setcounter{definition}{0}
\setcounter{remark}{0}

In this section we prove the optimality of the Gevrey regularity in
Theorem \ref{th:1} as well as Theorem \ref{th:2}. 

Actually we provide a proof of statement i) of
Theorem \ref{th:1}, since the proof of statement ii), for the operator
$ P_{2} $, proceeds along the same lines and is easier.

We construct a solution to the equation $ P_{1} u = 0 $ which is not
Gevrey $ s $ for any $ s < s_{0} $ and is defined in a neighborhood of
the origin.

In fact we look for a function $ u(x, y, t) $ defined on $ \R_{x}
\times \R_{y} \times (\R^{+}_{t} \cup \{0\})$ and such that 
$$ 
P_{1}(x, D) A(u) = 0,
$$
where
\begin{equation}
\label{eq:Au}
A(u)(x) = \int_{M_{u}}^{+\infty} e^{- i \rho x_{4} + x_{3} z(\rho)
  \rho^{\theta} - \rho^{\theta}} u(\rho^{\frac{1}{r}} x_{1},
\rho^{\mu} x_{2}, \rho) d\rho,
\end{equation}
where 
$$ 
\theta = \frac{1}{s_{0}} ,
$$
$ \mu > 0 $, $ z(\rho) $ and $ M_{u}> 0 $ are to be determined. Here we assume that
$ x \in U $, a suitable neighborhood of the origin whose size will
ultimately depend on the upper estimate for $ z(\rho) $.

We have
\begin{multline*}
P_{1}(x, D) A(u)(x) \\
= \int_{M_{u}}^{+\infty} e^{- i \rho x_{4} + x_{3} z(\rho)
  \rho^{\theta} - \rho^{\theta}} \Big[ -
  \rho^{\frac{2}{r}} \partial_{x_{1}}^{2} u - x_{1}^{2(r-1)}
  (z(\rho))^{2} \rho^{2 \theta} u 
\\[5pt]
+ x_{1}^{2(r-1)} \rho^{2} u - \rho^{2\mu} \partial_{x_{2}}^{2} u -
x_{2}^{2(p-1)} (z(\rho))^{2} \rho^{2\theta} u + x_{2}^{2(q-1)}
\rho^{2} u \Big ] d\rho.
\end{multline*}
Rewriting the r.h.s. of the above relation in terms of the variables $
y_{1} = \rho^{\frac{1}{r}} x_{1} $, $ y_{2} = \rho^{\mu} x_{2} $, we
obtain
\begin{multline*}
P_{1}(x, D) A(u)(x) \\
 = \int_{M_{u}}^{+\infty} e^{- i \rho x_{4} + x_{3} z(\rho)
  \rho^{\theta} - \rho^{\theta}} \Big[ -
  \rho^{\frac{2}{r}} \partial_{1}^{2} u - y_{1}^{2(r-1)}
  (z(\rho))^{2} \rho^{2 \theta -2 \frac{r-1}{r}} u 
\\[5pt]
+ y_{1}^{2(r-1)} \rho^{2 - 2 \frac{r-1}{r}} u - \rho^{2\mu} \partial_{2}^{2} u -
y_{2}^{2(p-1)} (z(\rho))^{2} \rho^{2\theta -2(p-1)\mu} u 
\\[5pt]
+ y_{2}^{2(q-1)}
\rho^{2 - 2(q-1)\mu} u \Big ]_{\substack{y_{1} = \rho^{1/r} x_{1} \\
    y_{2} = \rho^{\mu} x_{2}}} \ d\rho.
\end{multline*}
Choose now $ \mu = \frac{1}{q} $. Then the above relation becomes
\begin{multline*}
P_{1}(x, D) A(u)(x) \\
= \int_{M_{u}}^{+\infty} e^{- i \rho x_{4} + x_{3} z(\rho)
  \rho^{\theta} - \rho^{\theta}} \Big[ -
  \rho^{\frac{2}{r}} \left( \partial_{1}^{2} - y_{1}^{2(r-1)}
  \left( 1 - (z(\rho))^{2} \rho^{2 (\theta - 1)} \right) \right) u 
\\[5pt]
+  \rho^{\frac{2}{q}} \left( - \partial_{2}^{2}  -
y_{2}^{2(p-1)} (z(\rho))^{2} \rho^{2\theta -2\frac{p}{q}}  
+ y_{2}^{2(q-1)} \right) u \Big ]_{\substack{y_{1} = \rho^{1/r} x_{1} \\
    y_{2} = \rho^{\frac{1}{q}} x_{2}}} \ d\rho.
\end{multline*}
We point out that
$$ 
\theta - 1 < 0.
$$
We make the Ansatz that 
\begin{equation}
\label{eq:Mu}
|z(\rho) | < M_{u}^{1-\theta} .
\end{equation}
We shall see that condition \eqref{eq:Mu} will be satisfied.

Set $ \tau(\rho) = \left( 1 - (z(\rho))^{2} \rho^{2 (\theta - 1)}
\right)^{\frac{1}{2r}} $. We note that, due to condition
\eqref{eq:Mu}, the quantity in parentheses is positive. Choose 
\begin{equation}
\label{eq:u}
u(y_{1}, y_{2}, \rho) = u_{1}(\tau(\rho) y_{1}) u_{2}(y_{2}, \rho),
\end{equation}
where
\begin{equation}
\label{eq:u1}
\left( - \partial_{1}^{2}  + y_{1}^{2(r-1)} \tau(\rho)^{2r} \right)
u_{1}(\tau(\rho) y_{1}) 
=  \tau(\rho)^{2} \lambda u_{1}(\tau(\rho) y_{1}) ,
\end{equation}
and $ \lambda > 0 $ is such that, for fixed $ \rho > 0 $,  the factor
in front of $ u_{1} $ in the r.h.s. of the above equation is in the
spectrum of the quantum anharmonic oscillator 
$ \left( - \partial_{1}^{2}  + y_{1}^{2(r-1)}
  \left( 1 - (z(\rho))^{2} \rho^{2 (\theta - 1)} \right) \right)$, whose
frequency depends on both $ \rho $ and $ z(\rho) $.

We then obtain
\begin{multline*}
P_{1}(x, D) A(u)(x) \\
= \int_{M_{u}}^{+\infty} e^{- i \rho x_{4} + x_{3} z(\rho)
  \rho^{\theta} - \rho^{\theta}} u_{1}(\tau(\rho) \rho^{\frac{1}{r}}x_{1}) \Big[ \Big\{
  \rho^{\frac{2}{r}} \left( 1 - (z(\rho))^{2} \rho^{2 (\theta - 1)}
  \right)^{\frac{1}{r}} \lambda 
\\[5pt]
+  \rho^{\frac{2}{q}} \left( - \partial_{2}^{2}  -
y_{2}^{2(p-1)} (z(\rho))^{2} \rho^{2\theta -2\frac{p}{q}}  
+ y_{2}^{2(q-1)} \right) \Big\} u_{2}(y_{2}, \rho) \Big ]_{y_{2} =
\rho^{\frac{1}{q}} x_{2}} \ d\rho. 
\end{multline*}
Our next step is to find $ u_{2} $ as a solution of the differential
equation
\begin{multline}
\label{eq:u2}
 \left( 1 - (z(\rho))^{2} \rho^{2 (\theta - 1)}
  \right)^{\frac{1}{r}} \lambda u
\\
+  \rho^{\frac{2}{q} - \frac{2}{r}} \left( - \partial_{2}^{2}  -
y_{2}^{2(p-1)} (z(\rho))^{2} \rho^{2\theta -2\frac{p}{q}}  
+ y_{2}^{2(q-1)} \right) \Big) u = 0 ,
\end{multline}
where we wrote $ u $ instead of $ u_{2} $ for the sake of
simplicity. \eqref{eq:u2} becomes
\begin{multline*}
 \left( 1 - (z(\rho))^{2} \rho^{2 (\theta - 1)}
  \right)^{\frac{1}{r}} \lambda u
\\
+  \rho^{\frac{2}{q} - \frac{2}{r}} \left( - \partial_{2}^{2}
+ y_{2}^{2(q-1)} \right) u  - 
(z(\rho))^{2} \rho^{2\left(\theta - \frac{p - 1}{q} -
    \frac{1}{r}\right)} y_{2}^{2(p-1)} u = 0 ,
\end{multline*}
Since
$$ 
\theta - \frac{p - 1}{q} - \frac{1}{r} = \left( \frac{1}{q} -
  \frac{1}{r}\right) \frac{p-1}{q-1},
$$
we set
\begin{equation}
\label{eq:t}
t = \rho^{\frac{1}{q} - \frac{1}{r}} ,
\end{equation}
so that the above equation becomes
\begin{multline*}
 \left( 1 - (z_{1}(t))^{2} t^{2 (r-1)\frac{q}{q-1}\ \frac{q-p}{q-r} }
  \right)^{\frac{1}{r}} \lambda u
\\
+  t^{2} \left( - \partial_{2}^{2}
+ y_{2}^{2(q-1)} \right) u  - 
(z_{1}(t))^{2} t^{2 \frac{p - 1}{q-1}} y_{2}^{2(p-1)} u = 0 ,
\end{multline*}
where $ z_{1}(t) = z(\rho) $.
The latter equation can be turned into a stationary semiclassical
Schr\"odinger equation if we perform the canonical dilation 
$$ 
y_{2} = y t^{- \frac{1}{q-1}} :
$$
\begin{multline*}
 \left( 1 - (z_{1}(t))^{2} t^{2 (r-1)\frac{q}{q-1}\ \frac{q-p}{q-r} }
  \right)^{\frac{1}{r}} \lambda u
\\
- t^{2 \frac{q}{q-1}} \partial_{y}^{2} u
+ y^{2(q-1)}  u  - 
(z_{1}(t))^{2} y^{2(p-1)} u = 0 .
\end{multline*}
Set
\begin{equation}
\label{eq:h}
h = t^{\frac{q}{q-1}}.
\end{equation}
Note that $ t $, $ h $ are small and positive for large $ \rho $. Thus
we may rewrite the above equation as
\begin{equation}
\label{eq:eqh}
\Big [\left( 1 - (z_{2}(h))^{2} h^{2 (r-1) \frac{q-p}{q-r} }
  \right)^{\frac{1}{r}} \lambda 
- h^2 \partial_{y}^{2} 
+ y^{2(q-1)}    - 
(z_{2}(h))^{2} y^{2(p-1)} \Big] u = 0 ,
\end{equation}
where $ z_{2}(h) = z_{1}(t) $.

We want to show that there are countably many choices for the function
$ z_{2}(h) $ in such a way that equation \eqref{eq:eqh} has a non zero
solution in $ L^{2}(\R) $, which actually turns out to be a smooth
rapidly decreasing function. 

Since the term containing $ \lambda $ is a scalar quantity commuting
with the other terms, we consider first the operator
$$ 
- h^2 \partial_{y}^{2} + y^{2(q-1)} - (z_{2}(h))^{2} y^{2(p-1)}.
$$
This is a Schr\"odinger operator with a symmetric double well
potential. The latter is not positive everywhere; in order to work
with a positive double well potential we subtract (and add) its
minimum. This is
$$ 
\hat{\gamma} z_{2}^{2 \frac{q-1}{q-p}},
$$
where
$$ 
\hat{\gamma} = - \frac{q-p}{q-1} \left(
  \frac{p-1}{q-1}\right)^{\frac{p-1}{q-p}} < 0.
$$
Equation \eqref{eq:eqh} becomes
\begin{multline}
\label{eq:eqh0}
\Big [\left( 1 - (z_{2}(h))^{2} h^{2 (r-1) \frac{q-p}{q-r} }
  \right)^{\frac{1}{r}} \lambda + \hat{\gamma} z_{2}(h)^{2 \frac{q-1}{q-p}} 
\\
- h^2 \partial_{y}^{2} 
+ y^{2(q-1)}    - 
(z_{2}(h))^{2} y^{2(p-1)} - \hat{\gamma} z_{2}(h)^{2 \frac{q-1}{q-p}} \Big] u = 0 ,
\end{multline}
Let us make the Ansatz that $ z_{2} $ is a positive valued
function. We make the canonical dilation
$$ 
y_{2} = x z_{2}^{\frac{1}{q-p}}.
$$
Then \eqref{eq:eqh0} becomes
\begin{multline}
\label{eq:eqh1}
\Big [\left( 1 - (z_{2}(h))^{2} h^{2 (r-1) \frac{q-p}{q-r} }
  \right)^{\frac{1}{r}} \lambda + \hat{\gamma} z_{2}(h)^{2 \frac{q-1}{q-p}} 
- h^2 z_{2}(h)^{-\frac{2}{q-p}} \partial_{x}^{2} 
\\
+ z_{2}(h)^{2 \frac{q-1}{q-p}} x^{2(q-1)}    - 
(z_{2}(h))^{2\frac{q-1}{q-p}} x^{2(p-1)} -
\hat{\gamma} z_{2}(h)^{2 \frac{q-1}{q-p}} \Big] u = 0 , 
\end{multline}
whence
\begin{multline}
\label{eq:eqh2}
\Big [\left( 1 - (z_{2}(h))^{2} h^{2 (r-1) \frac{q-p}{q-r} }
  \right)^{\frac{1}{r}} z_{2}(h)^{-2\frac{q-1}{q-p}} \lambda + \hat{\gamma}
\\
- h^2 z_{2}(h)^{-\frac{2q}{q-p}} \partial_{x}^{2} 
+ x^{2(q-1)} - x^{2(p-1)} -
\hat{\gamma} \Big] u = 0 , 
\end{multline}
Let us consider the one dimensional Schr\"odinger  operator
\begin{equation}
\label{eq:buca}
- \left(h z_{2}(h)^{-\frac{q}{q-p}}\right)^{2} \partial_{x}^{2} +
x^{2(q-1)} - x^{2(p-1)} - \hat{\gamma} .
\end{equation}
By \cite{berezin} (Chapter 2, Theorem 3.1) it has a discrete simple
spectrum depending in a real analytic way on the parameter $ h
z_{2}(h)^{-\frac{q}{q-p}} $, for $ h > 0 $. Let us denote by
$$ 
E\left( \frac{h}{z_{2}(h)^{\frac{q}{q-p}}} \right)
$$
an eigenvalue. Let $ u = u(x, h) $ be the corresponding
eigenfunction. Then \eqref{eq:eqh2} becomes
\begin{equation}
\label{eq:eqh3}
\left( 1 - (z_{2}(h))^{2} h^{2 (r-1) \frac{q-p}{q-r} }
  \right)^{\frac{1}{r}} z_{2}(h)^{-2\frac{q-1}{q-p}} \lambda +
  \hat{\gamma} + E\left( \frac{h}{z_{2}(h)^{\frac{q}{q-p}}} \right) = 0.
\end{equation}
Next we are going to solve the above equation w.r.t. $ z_{2} $ as a
function of $ h $, for small positive values of $ h $. 
\begin{proposition}
\label{prop:z2}
There is $ h_{0} > 0 $ such that equation \eqref{eq:eqh3} implicitly
defines a function $ z_{2} \in C([0, h_{0}[) \cap C^{\omega}(]0,
h_{0}[) $. In particular 
$$ 
z_{2}(h) \to \tilde{z} = \left( -
  \frac{\lambda}{\hat{\gamma}}\right)^{\frac{q-p}{2(q-1)}} > 0
$$
when $ h \to 0+ $. Therefore we may always assume that
\begin{equation}
\label{eq:z2range}
z_{2}(h) \in \left[ \frac{1}{2} \tilde{z} , \frac{3}{2} \tilde{z} \right],
\end{equation}
for $h \in [0, h_{0}[  $.
\end{proposition}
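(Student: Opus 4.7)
The plan is to rewrite \eqref{eq:eqh3} as $F(h,z)=0$ where
$$F(h,z) = \left(1 - z^{2} h^{2(r-1)\frac{q-p}{q-r}}\right)^{\frac{1}{r}} z^{-2\frac{q-1}{q-p}} \lambda + \hat{\gamma} + E\!\left(\frac{h}{z^{\frac{q}{q-p}}}\right),$$
and to run a two--step implicit function argument at the base point $(h,z)=(0,\tilde{z})$: first produce a continuous branch, then upgrade to real analyticity on the open locus $\{h>0\}$ by exploiting the Berezin analyticity already cited.

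First I would identify the $h\to 0^{+}$ limit of the selected eigenvalue. Since by the choice of $\hat{\gamma}$ the double--well potential $V(x)=x^{2(q-1)}-x^{2(p-1)}-\hat{\gamma}$ is non--negative and vanishes exactly at its two minima, the operator in \eqref{eq:buca} is non--negative, while a Gaussian trial function centered at either minimum gives an upper bound of order $\eta:=h z^{-q/(q-p)}$ for the lowest eigenvalues. Hence the chosen branch $E(\eta)$ extends continuously to $\eta=0$ with $E(0)=0$, and $F$ is continuous on $[0,h_{0})\times[\tilde{z}/2,3\tilde{z}/2]$ for some $h_{0}>0$.

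At $h=0$ the equation $F(0,z)=0$ reduces to $z^{-2(q-1)/(q-p)}\lambda+\hat{\gamma}=0$, whose unique positive root is precisely $\tilde{z}=(-\lambda/\hat{\gamma})^{(q-p)/(2(q-1))}$; one also computes
$$\partial_{z} F(0,\tilde{z}) = -\frac{2(q-1)}{q-p}\, \tilde{z}^{-\frac{2(q-1)}{q-p}-1} \lambda \ne 0,$$
so $z\mapsto F(0,z)$ is strictly monotone across $\tilde{z}$. By continuity of $F$, for each sufficiently small $h\ge 0$ there is a unique $z_{2}(h)\in[\tilde{z}/2,3\tilde{z}/2]$ with $F(h,z_{2}(h))=0$, and $z_{2}$ is continuous with $z_{2}(0)=\tilde{z}$. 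For $h>0$ the Berezin theorem (already invoked) gives analyticity of $E(\eta)$ in $\eta>0$, hence analyticity of $F$ in $(h,z)$ on $(0,h_{0})\times(0,\infty)$; combined with $\partial_{z} F(h,z_{2}(h))\ne 0$ for small $h>0$ (by continuity up to $h=0$, using that $E'(\eta)=O(1)$ near $0$ via Feynman--Hellmann on the ground state), the analytic implicit function theorem promotes $z_{2}$ to an element of $C^{\omega}(]0,h_{0}[)$. The inclusion \eqref{eq:z2range} then follows by shrinking $h_{0}$.

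The principal obstacle is the behavior of the selected eigenvalue branch in the semiclassical limit: at $\eta=0$ the operator \eqref{eq:buca} degenerates into a mere multiplication by $V$, so Berezin's theorem does not directly apply and one must import enough semiclassical information about low--lying spectra of symmetric double--well Schr\"odinger operators to guarantee both $E(\eta)\to 0$ and $E'(\eta)=O(1)$ as $\eta\to 0^{+}$. Once this input is in hand, the rest is standard bookkeeping around the implicit function theorem.
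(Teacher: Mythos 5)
Your proposal is correct and follows essentially the same route as the paper's proof. The paper resolves the semiclassical obstacle you flag at the end by citing Simon's Theorem~1.1 (giving $E(\mu)/\mu\to e^{*}>0$, hence $E(0)=0$ and continuity of $E$ on $[0,\infty)$), and then proves your claimed bound $E'(\eta)=O(1)$ as a separate lemma (Lemma~\ref{lemma:dE}) by exactly the Feynman--Hellmann computation you sketch: $\partial_\mu E(\mu)=2\mu\|\partial_x v\|^2$ together with $\mu^2\|\partial_x v\|^2\le E(\mu)$ gives $0\le\partial_\mu E(\mu)\le 2E(\mu)/\mu$, which is bounded near $0$. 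Your Gaussian trial-function argument gives only the upper bound $E(\eta)=O(\eta)$, which suffices for what is used here, so it is a legitimate substitute for the Simon citation; and like you, the paper avoids a direct IFT at $h=0$ by arguing via strict monotonicity of $f(h,\cdot)$ on a small rectangle $[0,h_0[\times[\tilde z-\delta,\tilde z+\delta]$, obtaining a unique zero $z(h)$ there, and then a short contradiction argument for continuity at $h=0$; analyticity for $h>0$ comes from the Berezin analytic-parameter dependence plus the analytic implicit function theorem, exactly as you say.
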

\begin{proof}
The operator in \eqref{eq:buca} has a symmetric non negative double
well potential with two non degenerate minima and unbounded at
infinity. From Theorem 1.1 in \cite{simon_83} we deduce that
\begin{equation}
\label{eq:E}
\lim_{\mu \to 0+} \frac{E(\mu)}{\mu} = e^{*} > 0.
\end{equation}
We may then continue the function $ E $, by setting $ E(0) = 0 $, as a
function in $ C([0, +\infty[) \cap C^{\omega}(]0, +\infty[) $. 

Set
\begin{equation}
\label{eq:f}
f(h, z) = \left( 1 - z^{2} h^{2 (r-1) \frac{q-p}{q-r} }
  \right)^{\frac{1}{r}} z^{-2\frac{q-1}{q-p}} \lambda +
  \hat{\gamma} + E\left( \frac{h}{z^{\frac{q}{q-p}}} \right) .
\end{equation}
Note that $ f(0, \tilde{z}) = 0 $. We want to show that the equation $
f(h, z) = 0 $ can be uniquely solved w.r.t. $ z $ for $ h \in [0,
h_{0}] $, for a suitable $ h_{0} $. 

We need the
\begin{lemma}
\label{lemma:dE}
For every $ \mu_{0} > 0 $ we have that $ \partial_{\mu} E(\mu) $ exists
and is bounded for $ 0 \leq \mu \leq \mu_{0} $.
\end{lemma}
\begin{proof}[Proof of Lemma \ref{lemma:dE}]
Let 
$$ 
Q_{\mu}(x, \partial_{x}) = - \mu^{2} \partial_{x}^{2} +
x^{2(q-1)} - x^{2(p-1)} - \hat{\gamma} .
$$
From $ Q_{\mu} v = E(\mu) v $ we get 
$$ 
\langle Q_{\mu} \partial_{\mu}v , v\rangle + 2 \mu \| \partial_{x}
v\|^{2} = E(\mu) \langle  \partial_{\mu}v , v\rangle + (\partial_{\mu}
E(\mu)) \| v\|^{2}.
$$
Due to the self adjointness of $ Q_{\mu} $ the first terms on both
sides of the above identity are equal, so that
$$ 
\partial_{\mu} E(\mu) = 2 \mu \| \partial_{x} v\|^{2} \geq 0,
$$
for every $ \mu > 0 $, provided $ v $ is normalized. Again from $
Q_{\mu} v = E(\mu) v $ we deduce that 
\begin{equation}
\label{eq:v'} 
\mu^{2} \| \partial_{x}
v\|^{2} \leq \langle Q_{\mu} v , v \rangle = E(\mu). 
\end{equation}
Hence
$$ 
0 \leq \partial_{\mu} E(\mu) \leq 2 \frac{E(\mu)}{\mu} \to 2 e^{*}
$$
for $ \mu \to 0+ $. The existence of the right derivative in $ \mu = 0
$ is a consequence of \eqref{eq:E}.
\end{proof}

Now
\begin{multline*}
\frac{\partial f}{\partial z}(h, z) = - \frac{2}{r} \lambda 
\left( 1 - z^{2} h^{2 (r-1) \frac{q-p}{q-r} } \right)^{\frac{1 -
    r}{r}} z^{1-2\frac{q-1}{q-p}} h^{2 (r-1) \frac{q-p}{q-r} } 
\\
- 2 \frac{q-1}{q-p} \left( 1 - z^{2} h^{2 (r-1) \frac{q-p}{q-r} }
  \right)^{\frac{1}{r}} z^{-2\frac{q-1}{q-p} - 1} \lambda 
- \frac{q}{q-p} E'\left( \frac{h}{z^{\frac{q}{q-p}}} \right)
\frac{h}{z^{\frac{q}{q-p}}} z^{-1} .
\end{multline*}
The above quantity is strictly negative if $ (h, z) \in [0, h_{0}[
\times [\tilde{z} - \delta, \tilde{z} + \delta ] $, for a suitable
choice of small $ h_{0} $, $ \delta $. 

Because of the definition of $ \tilde{z} $ and \eqref{eq:f}, $ f(h,
\tilde{z}-\delta) > 0 $, $ f(h, \tilde{z}+\delta) < 0  $ possibly
taking a smaller $ h_{0} $, $ \delta $, for $ 0 \leq h \leq h_{0}
$. Since $ f $ is continuous and strictly decreasing on the $ h
$-lines there is a unique zero of the equation $ f(h, z(h)) = 0 $ with
$ z(h) \in [\tilde{z} - \delta, \tilde{z} + \delta ] $ for $ 0 \leq h
\leq h_{0} $. 

For positive $ h $ trivially $ z(h) $ is real analytic. Let us show
that $z(h) \in C([0, h_{0}[) $. Arguing by contradiction assume that $
z(h) \not \to \tilde{z} $ for $ h \to 0+ $. Then there is a sequence $
h_{k} \to 0+ $ such that $ z(h_{k}) \to \hat{z} \not = \tilde{z}
$. Then $ 0 = f(h_{k}, z(h_{k})) \to f(0, \hat{z})  $ which is false
since $ \tilde{z} $ is the only zero of $ f(0, z) = 0 $.
\end{proof}
\begin{remark}
\label{rem:Mu}
Let $ h_{0} $ be the quantity define in Proposition \ref{prop:z2}. 
Set $ h_{0} = \rho_{0}^{\left(\frac{1}{q} - \frac{1}{r}\right)
  \frac{q}{q-1}}  $. 
Choosing $ M_{u} \geq \max \{ \rho_{0} ,
(\frac{3}{2}\tilde{z})^{\frac{1}{1-\theta}}\} $ we have that the function $
z_{2} $ is defined for $ \rho \geq M_{u} $ and that \eqref{eq:Mu} is
satisfied, so that $ 1 - z(\rho)^{2} \rho^{2(\theta-1)} > 0 $.
\end{remark}

We are going to need also a couple of lemmas whose proofs can be found
in Appendix A.
\begin{lemma}
\label{lemma:apriori}
There exists $ \mu_{0} > 0 $ such that for $ v \in \mathscr{S}(\R) $ the
following a priori inequality holds
\begin{equation}
\label{eq:apriori}
\mu^{2} \| v'' \| + \| V v \| \leq C \left( \| Q_{\mu} v \| + \mu \| v
  \| \right),
\end{equation}
for a positive constant $ C $ independent of $ \mu \in ]0, \mu_{0}[ $. 
\end{lemma}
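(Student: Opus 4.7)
The plan is to square $\|Q_\mu v\|$ and use integration by parts to produce positive terms that dominate $\mu^4\|v''\|^2$ and $\|Vv\|^2$, with an error that can be absorbed back or bounded by $\mu^2\|v\|^2$. Write $V(x) = x^{2(q-1)} - x^{2(p-1)} - \hat\gamma \geq 0$, so that $Q_\mu = -\mu^2 \partial_x^2 + V$ with $V$ smooth and non-negative on $\mathbb{R}$.

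First I would compute, for real-valued $v \in \mathscr{S}(\mathbb{R})$,
\begin{equation*}
\|Q_\mu v\|^2 = \mu^4 \|v''\|^2 + \|Vv\|^2 - 2\mu^2 \langle v'', Vv\rangle.
\end{equation*}
Two integrations by parts on the cross term give
\begin{equation*}
-2\mu^2 \langle v'', Vv\rangle = 2\mu^2 \int V(v')^2\, dx - \mu^2 \int V'' v^2\, dx,
\end{equation*}
where the first piece is $\geq 0$ (so can be discarded for the inequality we want) and the second is the only sign-indefinite error. Hence
\begin{equation*}
\mu^4 \|v''\|^2 + \|Vv\|^2 \leq \|Q_\mu v\|^2 + \mu^2 \int V''\, v^2\, dx.
\end{equation*}

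The main (only) obstacle is to control $\mu^2\int V'' v^2\,dx$ by $\varepsilon\|Vv\|^2 + C_\varepsilon \mu^2\|v\|^2$. For this I would establish the pointwise bound $|V''(x)| \leq C(1 + V(x))$ on all of $\mathbb{R}$, which is immediate: at infinity $V(x) \sim x^{2(q-1)}$ while $V''(x) \sim 2(q-1)(2q-3)x^{2q-4}$, so $|V''|/V \to 0$; on any compact set both are bounded. Consequently
\begin{equation*}
\mu^2 \int V'' v^2\, dx \leq C\mu^2 \|v\|^2 + C\mu^2 \langle Vv, v\rangle \leq C\mu^2 \|v\|^2 + \varepsilon \|Vv\|^2 + C_\varepsilon \mu^4 \|v\|^2,
\end{equation*}
by Cauchy--Schwarz followed by a weighted arithmetic-geometric inequality. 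Choosing $\varepsilon$ small and restricting to $\mu \leq \mu_0$ (so $\mu^4 \leq \mu_0^2 \mu^2$), this term is absorbed.

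Plugging back and absorbing $\varepsilon \|Vv\|^2$ on the left gives $\mu^4\|v''\|^2 + \tfrac{1}{2}\|Vv\|^2 \leq \|Q_\mu v\|^2 + C\mu^2 \|v\|^2$. Taking square roots and using $\sqrt{a+b} \leq \sqrt{a} + \sqrt{b}$ yields the claimed inequality
\begin{equation*}
\mu^2 \|v''\| + \|Vv\| \leq C\bigl(\|Q_\mu v\| + \mu \|v\|\bigr),
\end{equation*}
with a constant $C$ independent of $\mu \in (0, \mu_0)$. The proof extends to complex-valued $v$ by splitting into real and imaginary parts, since $Q_\mu$ has real coefficients.
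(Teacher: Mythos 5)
Your proof is correct and follows essentially the same route as the paper: expand $\|Q_\mu v\|^2$, integrate by parts twice in the cross term, discard the manifestly non-negative piece $2\mu^2\int V(v')^2$, and absorb the remaining $\mu^2\int V'' v^2$ error using a polynomial bound on $V''$ in terms of $V$. The only cosmetic difference is that the paper bounds $|V''|\leq C_V(V^2+1)$ and absorbs $C_V\mu^2\|Vv\|^2$ directly for small $\mu$, whereas you use the linear bound $|V''|\leq C(1+V)$ followed by Cauchy--Schwarz and a weighted AM--GM; both lead to the same conclusion.
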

\begin{lemma}
\label{lemma:vk}
Let  $ v(x, \mu) $ denote the $ L^{2}(\R) $ normalized eigenfunction corresponding
to $ E(\mu) $. $ v $  is rapidly
decreasing w.r.t. $ x $ and satisfies the estimates
\begin{equation}
\label{eq:vj}
| v^{(j)} (x, \mu) | \leq C_{j} \mu^{-(j+1)/2},
\end{equation}
for $ x \in \R $, $ C_{j} > 0 $ independent of  $0 < \mu < \mu_{0} $, $
j = 0, 1, 2 $, $ \mu_{0} $ suitably small. 
\end{lemma}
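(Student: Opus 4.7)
My plan is to obtain the pointwise bounds from $L^{2}$ estimates via the elementary one-dimensional Sobolev inequality $\|f\|_{L^\infty}^{2}\le 2\|f\|_{L^{2}}\|f'\|_{L^{2}}$, applied successively to $v$, $v'$, and $v''$; so I first collect $L^{2}$ bounds on the first three derivatives of $v$.

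The subtraction of $\hat\gamma$ performed in \eqref{eq:eqh0} ensures that the potential $V(x)=x^{2(q-1)}-x^{2(p-1)}-\hat\gamma$ is nonnegative. Pairing $Q_\mu v = E(\mu)v$ against $v$ and integrating by parts yields $\mu^{2}\|v'\|_{L^{2}}^{2}\le E(\mu)$; since $E(\mu)\le C\mu$ by \eqref{eq:E} and Lemma \ref{lemma:dE}, we get $\|v'\|_{L^{2}}\le C\mu^{-1/2}$. Plugging $\|Q_\mu v\|_{L^{2}}=E(\mu)\le C\mu$ into Lemma \ref{lemma:apriori} delivers $\|v''\|_{L^{2}}\le C\mu^{-1}$ together with $\|Vv\|_{L^{2}}\le C\mu$. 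Differentiating the eigenvalue equation produces $Q_\mu v'=E(\mu)v'-V'(x)v$; feeding this into Lemma \ref{lemma:apriori} applied to $v'$ then yields $\|v'''\|_{L^{2}}\le C\mu^{-2}$, provided $\|V'v\|_{L^{2}}$ remains bounded as $\mu\to 0^{+}$. The Sobolev inequality now gives the three pointwise estimates, the sharp case being $\|v''\|_{L^\infty}^{2}\le 2\|v''\|_{L^{2}}\|v'''\|_{L^{2}}\le C\mu^{-3}$, i.e.\ $\|v''\|_{L^\infty}\le C\mu^{-3/2}$, matching the claim.

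The uniform bound on $\|V'v\|_{L^{2}}$ and the rapid decrease of $v$ I would obtain by a weighted-moment induction. Using $V(x)\ge c|x|^{2(q-1)}$ for $|x|\ge R$, testing $Q_\mu v=E(\mu)v$ against $x^{2k}v$ and integrating by parts gives a recursion of the form
\[
c\int x^{2k+2(q-1)}v^{2}\,dx\le E(\mu)\int x^{2k}v^{2}\,dx+\mu^{2}k(2k-1)\int x^{2k-2}v^{2}\,dx+C\|v\|_{L^{2}}^{2},
\]
from which, starting at $\|v\|_{L^{2}}=1$, one obtains $\|x^{m}v\|_{L^{2}}\le C_{m}$ inductively and uniformly in $\mu\in(0,\mu_{0})$. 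The analogous identity tested against $x^{2k}v'$, using the differentiated equation for $v'$, yields $\|x^{m}v'\|_{L^{2}}\le C'_{m}$; Sobolev then promotes these bounds to polynomial decay of $v$ and $v'$, and $v''$ inherits the same decay via the equation $\mu^{2}v''=(V-E)v$, proving that $v$ is Schwartz. The main technical hurdle is exactly this inductive step, namely absorbing the cross-term $\mu^{2}k(2k-1)\int x^{2k-2}v^{2}$ into the leading weighted $L^{2}$ norm uniformly in $\mu$, which I would handle by a standard Young-type interpolation.
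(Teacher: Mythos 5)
Your proof is correct, and its core is the same as the paper's: you derive $\|v'\|\le C\mu^{-1/2}$ from the energy identity $E(\mu)=\mu^{2}\|v'\|^{2}+\langle Vv,v\rangle$, then feed $\|Q_{\mu}v\|=E(\mu)\le C\mu$ into Lemma \ref{lemma:apriori} to get $\|v''\|\le C\mu^{-1}$ and $\|Vv\|\le C\mu$, differentiate the eigenvalue equation to reach $v'''$, and finish with the one-dimensional interpolation inequality $\|f\|_{\infty}^{2}\le 2\|f\|\,\|f'\|$ (the paper's ``Sobolev immersion''). The only genuine divergence is the treatment of $\|V'v\|$. The paper uses the pointwise Young-type bound $|V'|\le C\bigl(\mu^{-1/(2(2q-3))}V+\mu^{1/2}\bigr)$ together with $\|Vv\|\le C\mu$, which shows $\|V'v\|\to 0$ and yields the sharper $\|v'''\|\le C\mu^{-3/2}$ in one line; you instead prove uniform-in-$\mu$ bounds on the weighted moments $\|x^{m}v\|$ by testing the equation against $x^{2k}v$, obtaining only $\|V'v\|=O(1)$ and hence $\|v'''\|\le C\mu^{-2}$ --- still exactly enough, since $\|v''\|_{\infty}^{2}\le 2\|v''\|\,\|v'''\|\le C\mu^{-3}$. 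Your moment recursion is sound (the cross term $\mu^{2}k(2k-1)\|x^{k-1}v\|^{2}$ carries a lower-order moment with a bounded coefficient, and only finitely many moments, up to degree $2q-3$, are needed), and it buys you a self-contained proof of the rapid decrease, which the paper simply asserts as a standard property of eigenfunctions of Schr\"odinger operators with confining potential; the price is a longer argument and a slightly lossier bound on $\|v'''\|$, which however does not affect the stated estimates.
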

\begin{remark}
\label{rem:1}
Note that because of Lemma \ref{lemma:vk} the formal method of sliding
the differential operator $ P_{1} $ under the integration sign becomes
legitimate, since a power singularity does no harm to the convergence
of the integral in $ \rho $.
\end{remark}
The integral $ A(u) $ in \eqref{eq:Au} is a convergent integral since
$ u = u_{1} u_{2} $, $ u_{i} $ is a real analytic function of $ x_{i}
$, $ u_{1} $ is rapidly decreasing, while $ u_{2}
$ is rapidly decreasing w.r.t. $ x $ and satisfies \eqref{eq:vj} with $
\mu = \mathscr{O}\left(\rho^{\left(\frac{1}{q} - \frac{1}{r}\right)
  \frac{q}{q-1}}\right) $.

Then \eqref{eq:eqh2} holds for $ 0 \leq h \leq h_{0} $ and hence for $
\rho \geq \rho_{0} $.

Going back to \eqref{eq:Au} we see that
$$ 
P_{1}(x, D) A(u) = 0.
$$
Before concluding the proof of the sharpness of the Gevrey $ s_{0} $
regularity for $ A(u) $, we need to make sure that the function $ u =
u_{1} u_{2} $ does not have any effect on the convergence of the
integral at infinity as well as on the Gevrey behavior of $
A(u) $. 

As far as $ u_{1} $ is concerned, this is fairly obvious, since
$ u_{1} $ is a rapidly decreasing function of $ \tau(\rho) \rho^{\frac{1}{r}}
x_{1} $, where $ \tau(\rho) $ is defined before equation \eqref{eq:u},
and, computing this function at the origin---as we need to 
do---will not affect the exponential in $ A(u) $. We are thus left
with $ u_{2} = u_{2}(\rho^{\frac{1}{q}} x_{2}, \rho) $. Even though $
u_{2} $ is rapidly decreasing w.r.t. $ \rho^{\frac{1}{q}} x_{2} $, we
still need some estimate on $ u_{2} $ allowing us to conclude that $
u_{2} $ can be polynomially bounded in $ \rho $, uniformly for $ x_{2}
$ in a neighborhood of the origin and moreover that $ u_{2}(0, \rho) $
does not vanish with so high a speed to compromise the Gevrey $ s_{0}
$ regularity. 

Let us then consider $ u_{2} = u_{2}(x, \hslash) $. It is an
eigenfunction of the operator
\begin{equation}
\label{eq:Qh}
Q_{\hslash}(x, \partial_{x}) = - \hslash^{2} \partial_{x}^{2} +
x^{2(q-1)} - x^{2(p-1)} - \hat{\gamma} ,
\end{equation}
where, by \eqref{eq:eqh2},
\begin{equation}
\label{eq:hslash}
\hslash = \frac{h}{z_{2}(h)^{\frac{q}{q-p}}}.
\end{equation}
Note that $ \hslash $ tends to zero if and only if $ h $ tends to
zero. 
For the sake of simplicity we write $ u $ instead of $ u_{2} $; $ u $
is an eigenfunction of \eqref{eq:Qh} corresponding to the eigenvalue $
E(\hslash) $
\begin{equation}
\label{eq:eigen}
Q_{\hslash}(x, \partial_{x}) u = E(\hslash) u.
\end{equation}
Obviously $ u $ is a solution of \eqref{eq:eqh2}.

We explicitly observe that the choice of both $ u_{1} $ and $ u_{2} $
is arbitrary among the eigenfunctions of the corresponding
Schr\"odinger operators in the variables $ x_{1} $ and $ x_{2} $. It
would have probably been easier to deal with the first eigenfunction of
both operators, which can be studied with elementary me\-thods, but we
prefer to emphasize the fact that all the eigenfunctions share the
same properties that are needed to accomplish our construction. 

We are going to need an estimate of $ u $ in a classically forbidden
region, i.e. when $ \hslash $ is small ($ \rho $ is large) and $ x $
is in a neighborhood of the origin. We recall the following theorem
providing a lower bound for the tunneling of the solution:
\begin{theorem}[See \cite{zworski}, Theorem 7.7]
\label{th:tunnel}
Let $ U $ be a neighborhood of the origin in $ \R $. There exist
positive constants $ C $, $ \hslash_{0} $ such that
\begin{equation}
\label{eq:esttunnel}
\| u \|_{L^{2}(U)} \geq e^{- \frac{C}{\hslash}} \| u \|_{L^{2}(\R)} ,
\end{equation}
for $ 0 < \hslash \leq \hslash_{0} $. 
\end{theorem}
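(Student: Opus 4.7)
The plan is to derive the lower bound from a Gronwall-type estimate for the semiclassical energy $\Phi(x)=|u(x)|^{2}+\hslash^{2}|u'(x)|^{2}$, combined with a mass-concentration statement that forces $u$ to put a non-negligible share of its $L^{2}$-norm into any fixed bounded interval, and a semiclassical Caccioppoli estimate that turns pointwise control into $L^{2}$ control.

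First, from the eigenvalue equation $-\hslash^{2}u''=(E(\hslash)-V(x))u$ with $V(x)=x^{2(q-1)}-x^{2(p-1)}-\hat{\gamma}$, a direct differentiation yields $\Phi'(x)=2u(x)u'(x)(1+V(x)-E(\hslash))$. Since $E(\hslash)$ is bounded as $\hslash\to 0^{+}$ (by \eqref{eq:E}) and $V$ is bounded on any $[-R,R]$, the elementary inequality $2|uu'|\leq\Phi/\hslash$ delivers the differential inequality $|\Phi'(x)|\leq C_{R}\Phi(x)/\hslash$ on $[-R,R]$, so Gronwall gives
\begin{equation*}
\Phi(x)\leq e^{C'_{R}/\hslash}\,\Phi(x_{0}),\qquad x,x_{0}\in[-R,R].
\end{equation*}

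Second, Agmon-type upper bounds for the eigenfunction of the well potential $V-E(\hslash)$ (or directly the rapid-decrease estimates of Lemma \ref{lemma:vk}, which are uniform in the small parameter) allow one to choose $R$ large enough, independent of $\hslash$, so that $\|u\|_{L^{2}(|x|>R)}^{2}\leq\tfrac{1}{2}\|u\|_{L^{2}(\R)}^{2}$. Consequently, after assuming $U\subset[-R,R]$,
\begin{equation*}
\|u\|_{L^{2}(\R)}^{2}\leq 2\|u\|_{L^{2}([-R,R])}^{2}\leq 4R\sup_{[-R,R]}|u|^{2}\leq 4R\,e^{C'_{R}/\hslash}\,\Phi(x_{0})
\end{equation*}
for every $x_{0}\in U$. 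Averaging over $x_{0}\in U$ gives $|U|\cdot\|u\|_{L^{2}(\R)}^{2}\leq 4R\,e^{C'_{R}/\hslash}\bigl(\|u\|_{L^{2}(U)}^{2}+\hslash^{2}\|u'\|_{L^{2}(U)}^{2}\bigr)$.

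Finally, to remove the derivative term, pick a cutoff $\chi\in C_{c}^{\infty}$ equal to $1$ on $U$ and supported in a slightly larger neighborhood $U'$. Multiplying $-\hslash^{2}u''+(V-E(\hslash))u=0$ by $\chi^{2}u$ and integrating by parts yields
\begin{equation*}
\hslash^{2}\!\int\chi^{2}|u'|^{2}\,dx\leq\bigl(\|V\|_{L^{\infty}(U')}+|E(\hslash)|\bigr)\|u\|_{L^{2}(U')}^{2}+2\hslash^{2}\!\int|\chi\chi'|\,|u||u'|\,dx,
\end{equation*}
and Cauchy--Schwarz absorbs the last term into $\tfrac{1}{2}\hslash^{2}\int\chi^{2}|u'|^{2}\,dx+C\|\chi'\|_{\infty}^{2}\|u\|_{L^{2}(U')}^{2}$. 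This gives $\hslash^{2}\|u'\|_{L^{2}(U)}^{2}\leq C\|u\|_{L^{2}(U')}^{2}$ with $C$ independent of $\hslash$. Combining, $\|u\|_{L^{2}(\R)}^{2}\leq C e^{C'_{R}/\hslash}\|u\|_{L^{2}(U')}^{2}$, which is the desired bound after a harmless shrinking of the neighborhood and relabeling of constants. The main obstacle is the last step: one must keep careful track of the $\hslash$-dependence in the Caccioppoli estimate so that no additional negative powers of $\hslash$ spoil the exponential lower bound; this is why a fixed (rather than $\hslash$-scaled) cutoff is used.
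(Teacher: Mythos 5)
The paper offers no proof of Theorem \ref{th:tunnel}: it is quoted verbatim from \cite{zworski} (Theorem 7.7), where the lower bound is obtained from Carleman-type estimates valid in any dimension. Your argument is therefore a genuinely different, self-contained one-dimensional substitute, and its skeleton is sound: the Gronwall inequality for $\Phi=|u|^{2}+\hslash^{2}|u'|^{2}$ (using $\hslash^{2}u''=(V-E)u$ and $2|uu'|\le\Phi/\hslash$) is the standard ODE version of the tunneling bound; the averaging of $\Phi(x_{0})$ over $x_{0}\in U$ correctly converts the pointwise comparison into an $L^{2}$ one; and the Caccioppoli estimate with a \emph{fixed} cutoff does eliminate $\hslash^{2}\|u'\|_{L^{2}(U)}^{2}$ at the cost of an $\hslash$-independent constant only, so no spurious negative powers of $\hslash$ appear. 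What you gain over the citation is elementarity and an explicit constant $C$ (essentially $2R\sup_{[-R,R]}|1+V-E|$); what you lose is the generality of the Carleman route, which is irrelevant here since the problem is genuinely one-dimensional.

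One justification needs repair. The mass-concentration step --- choosing $R$ independent of $\hslash$ with $\|u\|_{L^{2}(|x|>R)}^{2}\le\tfrac12\|u\|_{L^{2}(\R)}^{2}$ --- cannot be drawn from Lemma \ref{lemma:vk} as you claim: the rapid decrease asserted there is qualitative, and the quantitative bounds \eqref{eq:vj} are $L^{\infty}$ bounds in $x$ that blow up as the semiclassical parameter tends to $0$; they say nothing uniform about the tails. The clean argument is the energy identity: pairing \eqref{eq:eigen} with the normalized $u$ gives $\int V|u|^{2}\,dx\le E(\hslash)$, and since the (shifted) potential satisfies $V\ge0$ everywhere and $V\ge1$ outside a fixed compact set while $E(\hslash)\to0$ by \eqref{eq:E}, the exterior mass is in fact $o(1)$ as $\hslash\to0^{+}$. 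With that substitution, and the harmless initial shrinking of $U$ so that the Caccioppoli cutoff stays supported in the original neighborhood, your proof is complete.
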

The Schr\"odinger operator $ Q_{\hslash} $ has a symmetric potential,
so that its eigenfunctions are either even or odd functions w.r.t. the
variable $ x $. 

\medskip
\noindent
\textbf{Case of an even eigenfunction.} We may assume that 
$$ 
\| u \|_{L^{2}(\R)} =1, \quad u(0, \hslash) > 0,
$$
since $ u'(0, \hslash) = 0 $ because of its parity and if $ u(0,
\hslash) = 0 $ would imply that $ u $, being a solution of
\eqref{eq:eigen}, is identically zero.

Moreover, by \eqref{eq:eigen}, $ \partial_{x}^{2}u(0, \hslash) > 0 $. 

Denote by $ x_{0} = x_{0}(\hslash) $ the first positive zero of $ V(x)
- E(\hslash) = x^{2(q-1)} - x^{2(p-1)} - \hat{\gamma} - E(\hslash)$. 
Note that $ u $ is strictly positive in the interval $ 0\leq x \leq
x_{0} $. In fact, by contradiction, denoting by $ \bar{x} $ the first
zero of $ u $ in $ [0, x_{0}] $, by \eqref{eq:eigen}, we may conclude
that $ u'' > 0 $ in $ [0, \bar{x}[ $ so that the same is true for $ u'$. 
Hence $ u(\bar{x}, \hslash) > u(0, \hslash) > 0 $, which is absurd.

By \eqref{eq:eigen}, $ u $ is strictly convex for $ 0 \leq x \leq x_{0} $ and has its minimum
at the origin and its maximum at $ x_{0} $. 

Define $ y = \frac{\partial_{x}u}{u} $. We have $ y > 0 $ if $ 0 < x
\leq x_{0} $. Then, writing $ y' $ for $ \partial_{x}y $,
$$ 
y' = \frac{V - E}{\hslash^{2}} - y^{2}.
$$
The function $ y $ has a maximum in the interval $ ]0, x_{0}[ $. In
fact $ y'(0) > 0 $ and $ y'(x_{0}) = - y^{2}(x_{0}) < 0 $. 
Denote by $ \bar{x} $ the point where where the maximum is attained:
it lies in the interior of the interval $ [0, x_{0}] $. 
Moreover we get 
$$ 
y(\bar{x}) = \frac{ \left(V(\bar{x}) - E(\hslash)\right)^{1/2}}{\hslash}.  
$$
From the definition of $ y $ we obtain
\begin{multline*}
u(0) = e^{- \int_{0}^{x_{0}} y(s) ds } u(x_{0}) 
\geq
e^{- x_{0} y(\bar{x})} \frac{1}{\sqrt{2 x_{0}}} \| u
\|_{L^{2}([-x_{0}, x_{0}])}
\\
\geq
\frac{1}{\sqrt{2 x_{0}}}
e^{- \frac{(-\hat{\gamma})^{1/2}}{\hslash}}
 e^{- \frac{C}{\hslash}}.
\end{multline*}
Here we used Theorem \ref{th:tunnel}, $ x_{0} < 1 $, $ E(\hslash) > 0
$ and $ u $ is normalized. We remark that $ x_{0}(\hslash) \to
\hat{x}_{0} > 0 $ when $ \hslash \to 0+ $. 

\medskip
We are now in a position to conclude the proof of Theorem \ref{th:1}
for an even function $ u_{2} $. We recall that
$$ 
\hslash = \mathscr{O}\left(\rho^{\left(\frac{1}{q} - \frac{1}{r}\right)
  \frac{q}{q-1}}\right) = \mathscr{O}\left(\rho^{-\kappa}\right).
$$
We compute
\begin{multline*}
(-D_{x_{4}})^{k} \partial_{x_{1}}^{\epsilon} A(u)(0) = 
\int_{M_{u}}^{+\infty} e^{- \rho^{\theta}} \rho^{k + \frac{\epsilon}{r}}
\tau(\rho)^{\epsilon} \partial^{\epsilon}u_{1}(0)
u_{2}(0, \rho)  d\rho
\\
\geq
\partial^{\epsilon}u_{1}(0) C \int_{M_{u}}^{+\infty} e^{- \rho^{\theta} -
  C_{1} \rho^{\kappa}} \tau(\rho)^{\epsilon}
\rho^{k + \frac{\epsilon}{r}}  d\rho
\geq
C_{2}^{k+1} k!^{s_{0}},
\end{multline*}
where $ \epsilon = 0 $ or $ 1 $ if $ u_{1} $ is even or odd
respectively and 
$$ 
\kappa = \left(\frac{1}{r} - \frac{1}{q}\right)\frac{q}{q-1} < \theta.
$$
The last inequality above holds since
\begin{multline*}
\int_{M_{u}}^{+\infty} e^{- \rho^{\theta} -
  C_{1} \rho^{\kappa}} \tau(\rho)^{\epsilon}
\rho^{k + \frac{\epsilon}{r}}  d\rho
\geq 
C_{\tau} \int_{M_{u}}^{+\infty} e^{- c \rho^{\theta}} 
\rho^{k}  d\rho 
\\
=
-C_{\tau} \int_{0}^{M_{u}} e^{- c \rho^{\theta}} \rho^{k}  d\rho +
C^{k+1}_{2} k!^{s_{0}} 
\\
\geq C_{2}^{k+1} k!^{s_{0}} \left( 1 - C_{\tau} C_{2}^{-(k+1)} M_{u} e^{- c M_{u}^{\theta}}
  \frac{M_{u}^{k}}{k!^{s_{0}}} \right)
\geq
C_{3}^{k+1} k!^{s_{0}},
\end{multline*}
if $ k $ is suitably large and $ C_{3} $ is suitable.

\medskip
\noindent
\textbf{Case of an odd eigenfunction.}
We may assume that 
$$ 
\| u \|_{L^{2}(\R)} =1, \quad u'(0, \hslash) > 0.
$$
Moreover, due to the parity $ u''(0, \hslash) = 0
$. Arguing as above we obtain that $ u' $ is positive in $ [0, x_{0}]$.  
Set
$$ 
y = \frac{u''}{u'}. 
$$
Arguing as above we deduce 
$$
u'(0, \hslash) \geq e^{- \frac{(- \hat{\gamma})^{1/2}}{\hslash}}
u'(x_{0}, \hslash) \geq \frac{1}{\sqrt{2 x_{0}}} e^{- \frac{(-
    \hat{\gamma})^{1/2}}{\hslash}} \| u' \|_{L^{2}([-x_{0}, x_{0}])}.
$$
Since
$$ 
\| u \|_{L^{2}([-x_{0}, x_{0}])} \leq x_{0}  \| u' \|_{L^{2}([-x_{0},
  x_{0}])}, 
$$
we get 
$$ 
u'(0, \hslash) \geq \frac{1}{x_{0} \sqrt{2 x_{0}}} e^{- \frac{(-
    \hat{\gamma})^{1/2}}{\hslash}} \| u \|_{L^{2}([-x_{0}, x_{0}])} .
$$
Using Theorem \ref{th:tunnel} as before we can conclude exactly as in
the case of an even eigenfunction.

\medskip
To finish the proof of Theorem \ref{th:1} we recall that Lemma \ref{lemma:vk} 
implies that the integral in the definition of $
A(\partial_{x_{2}} u) $ is absolutely convergent, so that, arguing as
before, we have
\begin{multline*}
(-D_{x_{4}})^{k} \partial_{x_{1}}^{\epsilon} A(\partial_{x_{2}} u)(0) 
\\
= 
\int_{M_{u}}^{+\infty} e^{- \rho^{\theta}} \rho^{k +
  \frac{\epsilon}{r} + \frac{1}{q}} 
\tau(\rho)^{\epsilon} \partial^{\epsilon}u_{1}(0)
\partial_{x_{2}} u_{2}(0, \rho)  d\rho
\\
\geq
\partial^{\epsilon}u_{1}(0) C \int_{M_{u}}^{+\infty} e^{- \rho^{\theta} -
  C_{1} \rho^{\kappa}}
\rho^{k + \frac{\epsilon}{r}+\frac{1}{q}}  d\rho
\geq
C_{2}^{k+1}  k!^{s_{0}},
\end{multline*}
again provided $ k $ is suitably large.

This concludes the proof of Theorem \ref{th:1}.

\bigskip
\begin{proof}[Proof of Theorem \ref{th:2}]
Optimality: It is enough to use a solution independent of $ x_{4}
$. This solves a generalized Baouendi-Goulaouic operator and it is
well known that one cannot have a Gevrey regularity better than $ p
$. 

Sufficiency: By \cite{abc}, $ P_{2} $ is microlocally Gevrey $ p $
hypoelliptic on the strata of depth $ p $.
\end{proof}

\section{Proof of Theorem \ref{th:1} (Cont'd)}
\setcounter{equation}{0}
\setcounter{theorem}{0}
\setcounter{proposition}{0}
\setcounter{lemma}{0}
\setcounter{corollary}{0}
\setcounter{definition}{0}
\setcounter{remark}{0}

In this section we prove that the operator $ P_{1} $ is Gevrey $ s_{0}
$ hypoelliptic. It is useful to establish a notation for the vector
fields defining $ P_{1} $:
\begin{eqnarray}
\label{eq:1bis}
P_{1} (x, D) & = & D_{1}^{2} + D_{2}^{2} + x_{1}^{2(r-1)}D_{3}^{2} +
 x_{1}^{2(r-1)} D_{4}^{2}  
\\
&  &
+ x_{2}^{2(p-1)} D_{3}^{2} + x_{2}^{2(q-1)} D_{4}^{2}  \notag
\\
& = & \sum_{j =1}^{6} X_{j}(x, D)^{2} 
\notag
\end{eqnarray}
We note that, using commutators of the fields up to the length $ r $,
we generate the ambient space.

The basic idea for the proof is to use the subelliptic estimate (see
e. g. \cite{hormander67} and \cite{BT} for the method of proof)
\begin{equation}
\label{eq:subell}
\| u \|_{\frac{1}{r}}^{2} + \sum_{j=1}^{6} \| X_{j}(x, D) u \|^{2} \leq C
\left( \langle P_{1}(x, D) u, u \rangle + \| u \|^{2} \right),
\end{equation}
where $ C $ is a positive constant, $ \| \cdot \|_{\frac{1}{r}} $ is
the Sobolev norm of order $ \frac{1}{r} $ and $ u \in
C_{0}^{\infty}(\R^{4}) $.   

A further remark is that we may assume $ \xi_{4} \geq 1 $: in fact
denoting by $ \psi $ a cutoff function such that $ \psi \geq 0 $, $
\psi(\xi_{4}) = 1 $ if $ \xi_{4} \geq 2 $ and $ \psi(\xi_{4}) = 0 $ if
$ \xi_{4} \leq 1 $, we may apply $ \psi(D_{4}) $ to the equation $
P_{1} u = f $ getting $ P_{1} \psi u = \psi f $, since $ \psi $
commutes with $ P_{1} $. On the other hand $ \psi f \in G^{s} $
if $ f \in G^{s} $, for $ s \geq 1 $, and we are interested in the
microlocal Gevrey regularity of $ u $ at the point $ (0; e_{4}) $. We
write $ u $ instead of $ \psi u $. 

Let us denote by $ \phi_{N} = \phi_{N}(x_{3}, x_{4})$, $ \chi_{N} =
\chi_{N}(\xi_{4}) $ cutoffs of the type defined in Appendix B. We want
to estimate the quantity $ \| X_{j} \phi_{N} D_{4}^{N} u \| $, $ j =
1, \ldots, 6 $, so that getting an estimate of the form
$ \| X_{j} \phi_{N} D_{4}^{N} u \| \leq C^{N+1} N^{s_{0} N} $ will be
enough to conclude that $ u \in G^{s_{0}} $ microlocally at $ (0,
e_{4}) $.  

As a preliminary remark we point out that if $ P_{1} u = f $, $ f \in
G^{s_{0}}(\Omega) $, then we may assume that $ u \in
C^{\infty}(\Omega) $ and has compact support w.r.t. the variables $
x_{1} $, $ x_{2} $. In fact, if $ \theta = \theta(x_{1}, x_{2}) \in
G^{s_{0}} \cap C_{0}^{\infty} $ and is identically equal to 1 in a
neighborhood of the origin, we obtain, multiplying the equation $
P_{1} u = f $ by $ \theta $, that $ P_{1}(\theta u) = \theta f -
[P_{1}, \theta] u $. We write $ u $ instead of $ \theta u $.

Now
\begin{multline}
\label{eq:chi}
\| X_{j} \phi_{N} D_{4}^{N} u \| 
\leq \| X_{j} \phi_{N} ( 1 - \chi_{N}(N^{-1} D_{4})) D_{4}^{N} u \| 
\\
+ \| X_{j} \phi_{N}
\chi_{N}(N^{-1} D_{4}) D_{4}^{N} u \|.
\end{multline}
Consider the first summand above. Since $ (1 - \chi_{N}) \psi $ has
support for $ 1 \leq \xi_{4} \leq N $, we deduce immediately a bound
of the first summand:
$$ 
\| X_{j} \phi_{N} ( 1 - \chi_{N}(N^{-1} D_{4})) D_{4}^{N} u \| \leq
C N^{N} ,
$$
where $ C $ denotes a positive constant independent of $ N $, but
depending on $ u $. This means a real analytic growth rate for $ u
$. It is enough then to bound the second summand in \eqref{eq:chi}.

To do this we plug the quantity $ \phi_{N} \chi_{N} D_{4}^{N} u $ into
\eqref{eq:subell} and, as a consequence, we obtain
\begin{multline*}
\| X_{j} \phi_{N} \chi_{N}(N^{-1} D_{4}) D_{4}^{N} u \|^{2} \leq
\| \phi_{N} \chi_{N}(N^{-1} D_{4}) D_{4}^{N} u \|_{\frac{1}{r}}^{2} 
\\
+
\sum_{j=1}^{6} \| X_{j}(x, D) \phi_{N} \chi_{N}(N^{-1} D_{4})
D_{4}^{N} u \|^{2} 
\\
\leq C \left(
\langle P_{1} \phi_{N} \chi_{N}(N^{-1} D_{4}) D_{4}^{N} u , \phi_{N}
\chi_{N}(N^{-1} D_{4}) D_{4}^{N} u \rangle 
\right .
\\
\left . \vphantom{D^{N}}
+
\| \phi_{N} \chi_{N}(N^{-1} D_{4}) D_{4}^{N} u \|^{2}
\right)
\end{multline*}
Our main concern is the estimate of the scalar product in the next to
last line of the above formula.  
We have
\begin{multline*}
\langle P_{1} \phi_{N} \chi_{N}(N^{-1} D_{4}) D_{4}^{N} u , \phi_{N}
\chi_{N}(N^{-1} D_{4}) D_{4}^{N} u \rangle 
\\
=
\langle \phi_{N} \chi_{N}(N^{-1} D_{4}) D_{4}^{N} P_{1} u , \phi_{N}
\chi_{N}(N^{-1} D_{4}) D_{4}^{N} u \rangle 
\\
+
\sum_{j=1}^{6}
\langle [ X_{j}^{2} , \phi_{N} ]  \chi_{N}(N^{-1} D_{4}) D_{4}^{N} u , \phi_{N}
\chi_{N}(N^{-1} D_{4}) D_{4}^{N} u \rangle .
\end{multline*}
The first term in the r.h.s. of the above relation poses no problem:
in fact $ P_{1} u = f \in G^{s_{0}} $ and thus the scalar product is
easily estimated by $ C^{N+1} N!^{s_{0}} $, while the right factor can
be absorbed on the left.

As for the summands containing a commutator, 
\begin{multline}
\label{eq:12comm}
\langle [ X_{j}^{2} , \phi_{N} ]  \chi_{N}(N^{-1} D_{4}) D_{4}^{N} u , \phi_{N}
\chi_{N}(N^{-1} D_{4}) D_{4}^{N} u \rangle 
\\
=
2 \langle [ X_{j} , \phi_{N} ]  \chi_{N}(N^{-1} D_{4}) D_{4}^{N} u ,
X_{j} \phi_{N} \chi_{N}(N^{-1} D_{4}) D_{4}^{N} u \rangle 
\\
-
\langle  N^{-1} [ X_{j}  [ X_{j}  , \phi_{N} ] ]  \chi_{N}(N^{-1}
D_{4}) D_{4}^{N} u , N \phi_{N} \chi_{N}(N^{-1} D_{4}) D_{4}^{N} u
\rangle .  
\end{multline}
Here we multiplied and divided by  $ N $ the factors of the second
scalar product to compensate for the second derivative landing on $
\phi_{N} $ because of the double commutator. The na\"\i ve idea behind
this is that one derivative of $ \phi_{N} $ is worth $ N $.

We are going to examine the terms with a single commutator first.
Both $ X_{1} $, $ X_{2} $ commute with $ \phi_{N} $ at this moment,
even though we shall see shortly that this is not going to be true any
longer for $ X_{2} $, at least in certain cases. Moreover
$  [ X_{j} , \phi_{N} ]  \chi_{N}(N^{-1} D_{4}) D_{4}^{N} u =
x_{1}^{r-1} \phi_{N}'  \chi_{N}(N^{-1} D_{4}) D_{4}^{N} u $, for $ j =
3, 4$. Here we just denote by $ \phi_{N}' $ a (self-adjoint)
derivative w.r.t. $ x_{3} $ or $ x_{4} $, since a more precise
notation would only burden the exposition. For $ j=5 $ we have
$  [ X_{j} , \phi_{N} ]  \chi_{N}(N^{-1} D_{4}) D_{4}^{N} u =
x_{2}^{p-1} \phi_{N}'  \chi_{N}(N^{-1} D_{4}) D_{4}^{N} u $ and for $
j = 6 $, $  [ X_{j} , \phi_{N} ]  \chi_{N}(N^{-1} D_{4}) D_{4}^{N} u =
x_{2}^{q-1} \phi_{N}'  \chi_{N}(N^{-1} D_{4}) D_{4}^{N} u $.

Let us consider the terms corresponding to $ j = 3, 4 $ first.
\begin{multline*}
2 \left |\langle  x_{1}^{r-1} \phi_{N}'  \chi_{N}(N^{-1} D_{4}) D_{4}^{N} u ,
X_{j} \phi_{N} \chi_{N}(N^{-1} D_{4}) D_{4}^{N} u \rangle \right| 
\\
\leq
\delta \| X_{j} \phi_{N} \chi_{N}(N^{-1} D_{4}) D_{4}^{N} u \|^{2} +
\frac{1}{\delta} \| x_{1}^{r-1} \phi_{N}'  \chi_{N}(N^{-1} D_{4})
D_{4}^{N} u \|^{2} ,
\end{multline*}
where $ \delta $ is a positive number so small to allow us to absorb
the first summand in the r.h.s. above on the left of the subelliptic
estimate. 

In order to be able to apply again the subelliptic estimate to the
second summand above we need to use the formula
\begin{equation}
\label{eq:346}
\phi_{N}' D_{4}^{N} = \sum_{j=0}^{N-1} (-1)^{j} D_{4} \phi_{N}^{(j+1)}
D_{4}^{N-j-1} + (-1)^{N} \phi_{N}^{(N+1)}. 
\end{equation}
Thus, since $ \chi_{N}(N^{-1}D_{4}) $ commutes with $ D_{4}^{N} $, 
\begin{multline*}
\| x_{1}^{r-1} \phi_{N}'  \chi_{N}(N^{-1} D_{4}) D_{4}^{N} u \| 
\\
\leq
\sum_{j=0}^{N-1} \| X_{4} \phi_{N}^{(j+1)} \chi_{N}(N^{-1}D_{4})
D_{4}^{N - j -1} u \| + C \| \phi_{N}^{(N+1)} \chi_{N}(N^{-1} D_{4}) u
\| ,
\end{multline*}
where we used the fact that the field $ X_{4} $ could be reconstructed
by just ``pulling back'' one $ x_{4} $-derivative. A completely
analogous treatment leads to an analogous conclusion when $ j = 6 $:
\begin{multline*}
\| x_{2}^{q-1} \phi_{N}'  \chi_{N}(N^{-1} D_{4}) D_{4}^{N} u \| 
\\
\leq
\sum_{j=0}^{N-1} \| X_{6} \phi_{N}^{(j+1)} \chi_{N}(N^{-1}D_{4})
D_{4}^{N - j -1} u \| + C \| \phi_{N}^{(N+1)} \chi_{N}(N^{-1} D_{4}) u
\| .
\end{multline*}
Furthermore it is clear that the terms on the right of the above
inequalities yield a real analytic growth estimate, after using the
properties of $ \phi_{N} $.

\medskip
We are thus left with the term for $ j = 5 $:
\begin{multline*}
2 \left| \langle x_{2}^{p-1} \phi_{N}'  \chi_{N}(N^{-1} D_{4})
  D_{4}^{N} u , X_{5} \phi_{N}
\chi_{N}(N^{-1} D_{4}) D_{4}^{N} u \rangle \right|
\\
\leq
\delta \| X_{5} \phi_{N} \chi_{N}(N^{-1} D_{4}) D_{4}^{N} u \|^{2} +
\frac{1}{\delta} \| x_{2}^{p-1} \phi_{N}'  \chi_{N}(N^{-1} D_{4})
  D_{4}^{N} u  \|^{2}
\end{multline*}
Here, again, $ \delta $ is chosen so that the first term in the
r.h.s. above can be absorbed on the left of the subelliptic estimate,
as before. We just need to be concerned with the second term.
Contrary to what has been done before, pulling back one derivative is
of no help; we have to resort to the subelliptic part of the
subelliptic estimate, i.e. the $ 1/r $--Sobolev norm. To do this we
pull back $ D_{4}^{1/r} $. This is well defined since $ \xi_{4} > 1 $,
but is a pseudodifferential operator, and its commutator with $
\phi_{N} $ needs some care. Using Lemma \ref{lemma:commut} and Corollary
\ref{cor:2}, we have
\begin{multline*}
\| x_{2}^{p-1} \phi_{N}'  \chi_{N}(N^{-1} D_{4}) D_{4}^{N} u  \| 
\\
\leq
\| x_{2}^{p-1} \phi_{N}'  \chi_{N}(N^{-1} D_{4}) D_{4}^{N -
  \frac{1}{r}} u  \|_{\frac{1}{r}} 
\\
+
\sum_{k=1}^{N-1} c_{k} \| x_{2}^{p-1} \phi_{N}^{(k+1)}
\chi_{N}(N^{-1}D_{4}) D_{4}^{N-k} u \|
\\
+
\| x_{2}^{p-1} a_{N, N}(x, D) \chi_{N}(N^{-1}D_{4}) D_{4}^{N} u \|. 
\end{multline*}
The last term has analytic growth and we may forget about it. The
first on the r.h.s. above can be resubjected to the subelliptic
estimate and treated as we just did. Same argument for every summand
in the sum, except that here we have to pull back a $ D_{4}^{1/r} $
once more in order to use the subelliptic estimate. 

Hence applying the subelliptic estimate---Sobolev part---keeps
producing factors $ x_{2}^{p-1} $. This procedure terminates when the
exponent of $ x_{2} $ becomes greater or equal to $ q-1 $. At that
point we do not need the subelliptic part, but we are able to pull
back a whole derivative, thus decreasing the powers of $ x_{2} $.

Before writing the product of such an iteration, we discuss also the
terms from \eqref{eq:12comm} containing a double commutator. For $ j =
3, 4$
\begin{multline*}
\left| \langle  N^{-1} x_{1}^{r-1} \phi_{N}'' \chi_{N}(N^{-1}
D_{4}) D_{4}^{N} u , N x_{1}^{r-1} \phi_{N} \chi_{N}(N^{-1} D_{4}) D_{4}^{N} u
\rangle \right|
\\
\leq \frac{1}{2} \| N^{-1} x_{1}^{r-1} \phi_{N}'' \chi_{N}(N^{-1}
D_{4}) D_{4}^{N} u  \|^{2}  
\\
+
\frac{1}{2} \| N x_{1}^{r-1} \phi_{N} \chi_{N}(N^{-1} D_{4}) D_{4}^{N} u \|^{2}.
\end{multline*}
Each of the summands in the r.h.s. is then treates as before. Note
that $ N^{-1} \phi_{N}'' $ counts as a first derivative and so does $
N \phi_{N} $. Same argument for $ j = 5, 6 $.

\medskip
Iterating the above procedure we arrive at the inquality for $ j = 1,
\ldots, 6 $,
\begin{multline}
\label{eq:iteration}
\| X_{j} \phi_{N} \chi_{N}(N^{-1} D_{4}) D_{4}^{N} u  \|
\\[5pt]
\leq
\sum
\| x_{2}^{a_{5}(p-1) - b(q-1) - a_{2}} \phi_{N}^{(1+a_{3} + a_{4} +
  a_{5} + a_{6} + k_{1} + \cdots + k_{h})} 
\\
\cdot
\chi_{N}(N^{-1} D_{4})
D_{4}^{N - k_{1} - \cdots - k_{h} - \frac{a_{2} + a_{5}}{r} - a_{3} -
  a_{4} - a_{6} - b \frac{r-1}{r}} u \|
\\[5pt]
+ \sum
N^{-(a_{3} + a_{4} + a_{5} + a_{6})}
\| x_{2}^{a_{5}(p-1) - b(q-1) - a_{2}} \phi_{N}^{(1+ 2a_{3} + 2a_{4} +
  2a_{5} + 2a_{6} + k_{1} + \cdots + k_{h})} 
\\
\cdot
\chi_{N}(N^{-1} D_{4})
D_{4}^{N - k_{1} - \cdots - k_{h} - \frac{a_{2} + a_{5}}{r} - a_{3} -
  a_{4} - a_{6} - b \frac{r-1}{r}} u \|
\\[5pt]
+ \sum
N^{a_{3} + a_{4} + a_{5} + a_{6}}
\| x_{2}^{a_{5}(p-1) - b(q-1) - a_{2}} \phi_{N}^{(1 + k_{1} + \cdots + k_{h})} 
\\
\cdot
\chi_{N}(N^{-1} D_{4})
D_{4}^{N - k_{1} - \cdots - k_{h} - \frac{a_{2} + a_{5}}{r} - a_{3} -
  a_{4} - a_{6} - b \frac{r-1}{r}} u \| ,
\end{multline}
where each sum is taken on the indices $ a_{2}, \ldots, a_{6} $, $
k_{1}, \ldots, k_{h} $, such that 
\begin{equation}
\label{eq:cond1}
0 \leq N - k_{1} - \cdots - k_{h} - \frac{a_{2} + a_{5}}{r} - a_{3} -
  a_{4} - a_{6} - b \frac{r-1}{r} < 1 ,
\end{equation}
\begin{equation}
\label{eq:cond2}
0 \leq a_{5}(p-1) - b(q-1) - a_{2} < q-1 .
\end{equation}
We have to evaluate the supremum of the r.h.s. in the above
relation. From the second condition
$$ 
\frac{a_{5}(p-1) - a_{2}}{q-1} - 1 < b \leq \frac{a_{5}(p-1) - a_{2}}{q-1}
$$
so that from the first condition we deduce
$$ 
N - 1 <
\sum_{j=1}^{h} k_{j} +a_{2}\left( \frac{1}{r} - \frac{r-1}{r(q-1)}
\right) + a_{3} + a_{4} + a_{5} \frac{1}{s_{0}} + a_{6} < N +
\frac{r-1}{r} .
$$
Since $ r < q $ the coefficient of $ a_{2} $ is positive. In order to
estimate \eqref{eq:iteration} we need to compute
$$ 
\max \left\{ \sum_{j=1}^{h} k_{j} + a_{3} + a_{4} + a_{5} + a_{6}
\right\} ,
$$
where the maximum is on all indices verifying conditions
\eqref{eq:cond1}, \eqref{eq:cond2}. 
The three sums in \eqref{eq:iteration} give the same contribution,
because when an index is missing among the derivatives of $ \phi_{N} $
it is found at the exponent of the factor $ N $ and, viceversa when it
appears twice, it appears with a negative sign at the exponent of $ N $.

It is then clear that the maximum
is $ \sim N^{N s_{0}} $ so that we finally get
$$ 
\| X_{j} \phi_{N} \chi_{N}(N^{-1} D_{4}) D_{4}^{N} u  \| \leq C^{N+1} N!^{s_{0}},
$$
and this achieves the proof of the theorem.

\setcounter{section}{0}
\renewcommand\thesection{\Alph{section}}
\section{Appendix}

\setcounter{equation}{0}
\setcounter{theorem}{0}
\setcounter{proposition}{0}
\setcounter{lemma}{0}
\setcounter{corollary}{0}
\setcounter{definition}{0}
\renewcommand{\thetheorem}{\thesection.\arabic{theorem}}
\renewcommand{\theproposition}{\thesection.\arabic{proposition}}
\renewcommand{\thelemma}{\thesection.\arabic{lemma}}
\renewcommand{\thedefinition}{\thesection.\arabic{definition}}
\renewcommand{\thecorollary}{\thesection.\arabic{corollary}}
\renewcommand{\theequation}{\thesection.\arabic{equation}}
\renewcommand{\theremark}{\thesection.\arabic{remark}}

In this appendix we just include for the sake of completeness the
proofs of Lemma \ref{lemma:apriori} and Lemma \ref{lemma:vk}.

\begin{proof}[Proof of Lemma \ref{lemma:apriori}]
From the identity
$$ 
\| Q_{\mu} v \|^{2} = \mu^{4} \| v'' \|^{2} + \| V v \|^{2} - 2
\mu^{2} \re \langle v'' , V v \rangle,
$$
observing that
\begin{multline*}
2 \re \langle v'', Vv \rangle = \langle \left( V \partial^{2}
  + \partial^{2} V \right) v , v \rangle 
\\
= 2 \langle \partial
V \partial v ,v \rangle - \langle V' \partial v , v \rangle +
\langle \partial V' v , v \rangle
\\
= - 2 \langle V v' , v' \rangle + \langle \left[ \partial , V' \right] v
, v \rangle 
\\
=
-2 \langle V v' , v' \rangle + \langle  V'' v , v \rangle.
\end{multline*}
we deduce
$$ 
\| Q_{\mu} v \|^{2} = \mu^{2} \| v'' \|^{2}  + \| V v \|^{2} + 2
\mu^{2} \langle V v' , v' \rangle - \mu^{2} \langle V'' v , v \rangle.
$$
The next-to-last term above is non negative. Consider the last. Since
$ V $ is a polynomial potential we may write $ | V'' | \leq C_{V}
\left( V^{2} + 1 \right) $, for a suitable positive constant $ C_{V}
$. Then $ - \mu^{2} \langle V'' v , v \rangle \geq - C_{V} \mu^{2}
\left( \| V v \|^{2} + \| v \|^{2} \right) $ and \eqref{eq:apriori}
ensues. 
\end{proof}
\begin{proof}[Proof of Lemma \ref{lemma:vk}]
To start with $ v \in \mathscr{S}(\R_{x}) $ since the potential diverges at
infinity and $ v $ is an eigenfunction.

In order to prove \eqref{eq:vj} we are going to show that
\begin{equation}
\label{eq:vjL2}
\| v^{(j)} \| \leq C_{0, j} \mu^{-j/2}, 
\end{equation}
for $ j = 1, 2, 3 $.  The estimate \eqref{eq:vj} is then a
consequence of \eqref{eq:E}, \eqref{eq:v'}, \eqref{eq:vjL2} and the Sobolev immersion
theorem.

For the sake of simplicity let us write $ Q_{\mu}(x, \partial_{x}) $
as $ Q_{\mu}(x, \partial_{x}) = - \mu^{2} \partial^{2} + V(x) $, so
that $ v $ is such that $ Q_{\mu} v = E(\mu) v $. 
We may assume that $ v $ is normalized in $ L^{2}(\R) $.
Note that $ Q_{\mu}$ is self-adjoint. From the latter equation we
deduce that 
$$ 
E(\mu) = \langle Q_{\mu} v, v \rangle = \mu^{2} \| v'\|^{2} + \langle
V v, v \rangle \geq \mu^{2} \| v' \|^{2} ,
$$
which implies, by \eqref{eq:E},
$$ 
\| v' \| \leq C \frac{(E(\mu))^{1/2}}{\mu} \leq C_{1} \mu^{-1/2}.
$$
This proves \eqref{eq:vjL2} when $ j = 1 $.

From \eqref{eq:apriori}
$$ 
\mu^{4} \| v'' \|^{2} \leq C \left( \| Q_{\mu} v \|^{2} + \mu^{2} \| v
  \|^{2}\right) = C \left( E(\mu)^{2} + \mu^{2}\right),
$$
or
$$ 
\| v'' \| \leq C_{2} \mu^{-1},
$$
i.e. \eqref{eq:vjL2} with $ j = 2 $. 

Let us consider now the third derivative of $ v $. From $ Q_{\mu} v =
E(\mu) v $ we have $ Q_{\mu} v' = E(\mu) v' - V' v $. Hence
\begin{eqnarray*}
\mu^{2} \| v''' \| & \leq & C \left( \| Q_{\mu} v' \| + \mu \| v' \|
                            \right)
\\
& \leq & C \left( E(\mu) \| v' \| + \| V' v \| + \mu \| v' \| \right)
\\
& \leq & C' \mu^{1/2} + C \| V' v \|,
\end{eqnarray*}
where we used \eqref{eq:vjL2} for $ j = 1 $. 

Since, analogously to the above argument, 
$$
| V'| \leq C_{V}' \left( \mu^{- \frac{1}{2(2q-3)}} V +
  \mu^{\frac{1}{2}}\right), 
$$
we get that
$$ 
\|  V' v \| \leq C_{V}'' \left(  \mu^{- \frac{1}{2(2q-3)}} \| V v \| +
  \mu^{\frac{1}{2}} \right) 
$$
and hence
\begin{eqnarray*}
\mu^{2} \| v''' \| & \leq & C'' \mu^{\frac{1}{2}} +  C'''  \mu^{-
  \frac{1}{2(2q-3)}} \| V v \| 
\\
& \leq & C'' \mu^{\frac{1}{2}} +  C_{1} \mu^{- \frac{1}{2(2q-3)}}
         \left(  E(\mu) + \mu \right)
\\
&  \leq & C'' \mu^{\frac{1}{2}} + C_{2} \mu^{1 - \frac{1}{2(2q-3)}},
\end{eqnarray*}
which implies \eqref{eq:vjL2} when $ j = 3 $, since $ q > 3/2 $.
\end{proof}

\section{Appendix}

\setcounter{equation}{0}
\setcounter{theorem}{0}
\setcounter{proposition}{0}
\setcounter{lemma}{0}
\setcounter{corollary}{0}
\setcounter{definition}{0}
Let $ \phi_{N} $, $ N \in \N $, $ \omega_{N} $ and $ \chi_{N} $ be 
Ehrenpreis type cutoff functions, i.e. $ \phi_{N} \in
C_{0}^{\infty}(\R) $, $ \omega_{N}$, $ \chi_{N} \in C^{\infty}(\R) $, with $
\phi_{N} = 1 $ near the origin and $ \omega_{N} = 1 $ for $ x > 2 $, $
\omega_{N} = 0 $ for $ x < 1 $, $ \phi_{N} $, $ \omega_{N} $, $
\chi_{N} $ non negative. 
Furthermore we assume that $ \chi_{N} = 0 $ for $ x < 3 $ and $
\chi_{N} = 1 $ for $ x > 4 $, so that $ \omega_{N} \chi_{N} = \chi_{N}
$. 

Ehrenpreis type functions have the property that, for $ k \leq R N $,
$ R > s_{0} $,
$ | \partial^{k} \phi_{N}(x) | \leq C_{\phi}^{k+1} N^{k} $ and
analogous estimates for the derivatives of $ \omega_{N} $, $ \chi_{N} $.

\medskip

The purpose of this appendix is to give a proof of the %
\begin{lemma}
\label{lemma:commut}
Let $ 0 < \theta < 1 $. Then
\begin{multline}
\label{eq:commut}
[ \omega_{N}(N^{-1} D)  D^{\theta} , \phi_{N}(x)] \chi_{N}(N^{-1} D)
D^{N - \theta} 
\\
=
\sum_{k=1}^{N}  a_{N, k}(x, D) \chi_{N}(N^{-1} D) D^{N}, 
\end{multline}
where $ a_{N, k} $ is a pseudodifferential operator of order $ -k $
such that
\begin{equation}
\label{eq:aNk}
| \partial_{\xi}^{\alpha} a_{N, k}(x, \xi) | \leq C_{a}^{k+1} N^{k+\alpha}
\xi^{-k-\alpha}, \qquad 1 \leq k \leq N, \quad \alpha \leq N.
\end{equation}
\end{lemma}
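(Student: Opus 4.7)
The plan is to derive the expansion by standard symbolic calculus, exploiting the key geometric fact that $\chi_N$ forces $\omega_N \equiv 1$. Since $a(D) := \omega_N(N^{-1}D) D^{\theta}$ is a Fourier multiplier, the commutator has the oscillatory integral representation
\begin{equation*}
[a(D),\phi_N]u(x) = \iint e^{i(x-y)\xi} a(\xi)\bigl(\phi_N(y)-\phi_N(x)\bigr) u(y)\, dy\, d\xi,
\end{equation*}
which makes sense because $\phi_N \in C_0^\infty$. I would Taylor expand $\phi_N(y)$ around $x$ up to order $N$, then integrate by parts in $\xi$ to trade each monomial $(y-x)^k$ for $(-D_\xi)^k$ acting on $a(\xi)$. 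This produces the classical asymptotic expansion
\begin{equation*}
[a(D),\phi_N] = \sum_{k=1}^{N-1} \frac{(-i)^{k}}{k!}\, \op\bigl(\partial_\xi^{k} a(\xi)\bigr)\, \phi_N^{(k)}(x) + R_N,
\end{equation*}
where the remainder $R_N$ comes from the integral form of Taylor's formula and carries $\phi_N^{(N)}$ together with $\partial_\xi^{N} a$.

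The decisive step is the composition on the right with the Fourier multiplier $\chi_N(N^{-1}D) D^{N-\theta}$, whose symbol multiplies the commutator symbol directly. The crucial observation is that $\supp \chi_N(N^{-1}\cdot) \subset \{\xi \geq 3N\}$ while $\omega_N(N^{-1}\xi) \equiv 1$ on $\{\xi \geq 2N\}$; hence every Leibniz term in $\partial_\xi^{k}\bigl(\omega_N(N^{-1}\xi)\xi^{\theta}\bigr)$ that differentiates $\omega_N$ is supported in the annulus $\{N \leq \xi \leq 2N\}$ and is killed by $\chi_N(N^{-1}\xi)$. Only the pure power survives, and one has
\begin{equation*}
\partial_\xi^{k}\bigl(\omega_N(N^{-1}\xi)\xi^{\theta}\bigr)\,\chi_N(N^{-1}\xi)\,\xi^{N-\theta} = c_k\, \xi^{N-k}\,\chi_N(N^{-1}\xi),
\end{equation*}
with $c_k = \theta(\theta-1)\cdots(\theta-k+1)$. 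This identifies, for $1 \leq k \leq N-1$, the operator $a_{N,k}(x,D)$ as the pseudodifferential operator whose symbol (after factoring $\chi_N(N^{-1}D) D^{N}$ to the right) is $\frac{(-i)^k c_k}{k!}\phi_N^{(k)}(x)\,\xi^{-k}$, which is of order $-k$.

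The bound \eqref{eq:aNk} follows by combining the Ehrenpreis estimate $|\phi_N^{(k)}(x)| \leq C_\phi^{k+1} N^{k}$ (valid for $k \leq RN$, in particular $k \leq N$) with the elementary $|\partial_\xi^{\alpha}\xi^{-k}| \leq k(k+1)\cdots(k+\alpha-1)\, \xi^{-k-\alpha} \leq (2N)^{\alpha}\xi^{-k-\alpha}$ whenever $\alpha \leq N$, $k \leq N$. The main obstacle I expect is handling the remainder $R_N$ after composition: one must verify that the resulting operator can be packaged as $a_{N,N}(x,D)\,\chi_N(N^{-1}D) D^{N}$ with a symbol satisfying the same estimate for $k=N$. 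This is done by writing the Taylor remainder in integral form, applying the same $\omega_N \equiv 1$ argument on the support of $\chi_N$ to reduce $\partial_\xi^{N} a(\xi)$ to $c_N \xi^{\theta-N}$, and using the Ehrenpreis bound $|\phi_N^{(N)}| \leq C^{N+1} N^{N}$ to absorb the $N^{N}$ factor into the constant; multiplication by $\xi^{N-\theta}$ then yields the required net order $-N$.
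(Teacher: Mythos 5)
Your treatment of the terms $k\le N-1$ is correct and coincides with the paper's: the paper also expands the commutator symbol to order $N$ (via the composition formula rather than the Schwartz kernel, which is immaterial) and uses exactly the two facts $\omega_N^{(j)}\chi_N=0$ for $j\ge1$ and $\omega_N\chi_N=\chi_N$ to reduce $\partial_\xi^k\bigl(\omega_N(N^{-1}\xi)\xi^\theta\bigr)\chi_N(N^{-1}\xi)$ to $c_k\,\xi^{\theta-k}\chi_N(N^{-1}\xi)$; your identification of $a_{N,k}$ for $k\le N-1$ agrees with Corollary \ref{cor:2}.

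The gap is in the remainder, which is where essentially all the work in the paper's proof lies, and your sketched reduction of it does not go through. After the $N$ integrations by parts, $R_N$ is an operator with an \emph{amplitude} $\partial_\xi^N a(\xi)\,F(x,y)$, where $F(x,y)=\int_0^1(1-s)^{N-1}\phi_N^{(N)}(x+s(y-x))\,ds$ still depends on $y$; composing on the right with the multiplier $\chi_N(N^{-1}D)D^{N-\theta}$ is therefore \emph{not} multiplication of symbols at a common frequency. Reducing $R_N$ to a left symbol (equivalently, using the composition formula as the paper does) evaluates $\partial_\xi^N a$ at shifted frequencies $\xi+s\zeta$, convolved against the $\zeta$-Fourier transform of the $\phi_N$ factor, and the constraint $\xi\ge 3N$ coming from $\supp\chi_N(N^{-1}\cdot)$ does not prevent $\xi+s\zeta$ from landing in $[N,2N]$, which is precisely where the Leibniz terms containing $\omega_N^{(j)}$, $j\ge1$, are supported. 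So ``the same $\omega_N\equiv1$ argument'' does not reduce $\partial_\xi^N a$ to $c_N\xi^{\theta-N}$ in the remainder. The paper handles this by splitting into $|\zeta|\le\epsilon|\xi|$ (where, for $\epsilon$ small, one does have $|\xi+s\zeta|>2N$ and your argument applies) and $|\zeta|\ge\epsilon|\xi|$; in the second region it must gain decay $|\zeta|^{-N-2}$ by further integrations by parts in $z$, at the cost of producing $\phi_N^{(2N+2)}$ and of keeping all the $\omega_N^{(k)}$ terms, estimated using $|\xi+s\zeta|\sim N$ on their supports. This is also why the Ehrenpreis family must control derivatives of $\phi_N$ well beyond order $N$; your appeal to $|\phi_N^{(N)}|\le C^{N+1}N^N$ alone cannot close this step, and until the large-$\zeta$ region is treated the estimate \eqref{eq:aNk} for $k=N$ is not established.
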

\begin{proof}
From the composition formula of two pseudodifferential operators $ a
$, $ b $
$$ 
\sigma(a \circ b)(x, \xi) = \int e^{i z \zeta} a(x, \xi+\zeta)
b(x-z, \xi) dz \ \dbar \zeta ,
$$
we obtain
\begin{multline*}
\sigma\left( [ \omega_{N}(N^{-1} D)  D^{\theta} , \phi_{N}] \right) =
\sum_{k=1}^{N-1} \frac{1}{k!} \partial_{\xi}^{k}(\omega_{N}(N^{-1}
\xi) \xi^{\theta}) i^{-k} \phi_{N}^{(k)}(x) 
\\
+
\int \int_{0}^{1} e^{i z \zeta} (1-s)^{N-1} \frac{\zeta^{N}}{(N-1)!}
\left( \omega_{N}(N^{-1}\xi) \xi^{\theta}\right)^{(N)}(\xi + s \zeta)
\\
\cdot
\phi_{N}(x-z) ds dz \dbar \zeta.
\end{multline*}
Let us consider first the contribution to \eqref{eq:commut} given by
the terms in the sum in the first line of the above formula. 
We have
\begin{multline*}
\left |\sum_{k=1}^{N-1} \frac{1}{k!} \sum_{j=0}^{k} \binom{k}{j} 
   \omega_{N}^{(j)}(N^{-1}\xi) N^{-j} \theta (\theta - 1) 
\cdots (\theta - k + j + 1) \right.
\\
\left .
\vphantom{\sum_{k=1}^{N-1}}
\xi^{\theta - k + j}  \phi_{N}^{(k)}(x) \chi_{N}(N^{-1} \xi) \xi^{N-\theta} \right|
\\
=
\left |\sum_{k=1}^{N-1} \frac{1}{k!}
   \omega_{N}(N^{-1}\xi) \theta (\theta - 1) \cdots (\theta - k+1) 
\phi_{N}^{(k)}(x) \chi_{N}(N^{-1} \xi) \xi^{N-k} \right|
\\
=
\left |\sum_{k=1}^{N-1}
a_{N, k} (x, \xi)  \chi_{N}(N^{-1} \xi) \xi^{N} \right|
\\
\leq
C \sum_{k=1}^{N-1} \left(C_{\phi}^{k+1} \frac{1}{k!} N^{k} (k-1)! \right)
\chi_{N}(N^{-1} \xi) \xi^{N - k} ,
\end{multline*}
because $ \omega_{N}^{(j)} \chi_{N} = 0 $ if $ j \geq 1 $ and $
\omega_{N} \chi_{N} = \chi_{N} $.

Next we are going to examine the remainder
\begin{multline*}
\int \int_{0}^{1} e^{i z \zeta} (1-s)^{N-1} \frac{\zeta^{N}}{(N-1)!}
\left( \omega_{N}(N^{-1}\xi) \xi^{\theta}\right)^{(N)}(\xi + s \zeta)
\\
\cdot
\phi_{N}(x-z) ds dz \dbar \zeta .
\end{multline*}
In order to deal with the integral w.r.t. $ \zeta $ we partition the
whole $ \zeta $-space into the regions $ |\zeta| \leq \epsilon |\xi| $
and $ |\zeta| \geq \epsilon |\xi| $, where $ \epsilon $ denotes a
positive constant which will be chosen $ < 1 $. 
Let us consider the region $ |\zeta| \leq \epsilon |\xi| $ first. 
We point out that $ |\zeta| \leq \epsilon |\xi| $ implies $ (1 -
\epsilon) |\xi| \leq | \xi + s \zeta| \leq (1 + \epsilon) |\xi| $. 
Then 
\begin{multline*}
\left | \int \int_{|\zeta| \leq \epsilon |\xi|} \int_{0}^{1} e^{i z \zeta} (1-s)^{N-1} \frac{\zeta^{N}}{(N-1)!}
\left( \omega_{N}(N^{-1}\xi) \xi^{\theta}\right)^{(N)}(\xi + s \zeta) \right.
\\
\left . \vphantom{\int \int_{0}^{1}}
\cdot 
\phi_{N}(x-z) ds dz \dbar \zeta \right |
\\
=
\left | \int \int_{|\zeta| \leq \epsilon |\xi|} \int_{0}^{1} e^{i z \zeta}  \frac{(1-s)^{N-1}}{(N-1)!}
\left( \omega_{N}(N^{-1}\xi) \xi^{\theta}\right)^{(N)}(\xi + s \zeta) \right.
\\
\left . \vphantom{\int \int_{0}^{1}}
\cdot 
\phi_{N}^{(N)}(x-z) ds dz \dbar \zeta \right |
\\
\leq
\int \int_{|\zeta| \leq \epsilon |\xi|} \int_{0}^{1}  \frac{(1-s)^{N-1}}{(N-1)!}
\sum_{k=0}^{N} \binom{N}{k} N^{-k} | \omega_{N}^{(k)}(N^{-1}(\xi + s
\zeta)) | 
\\
\cdot
\theta | \theta -1| \cdots |\theta - N + k + 1| |\xi + s \zeta|^{\theta -
  N + k} |\phi_{N}^{(N)}(x-z)| ds dz \dbar \zeta
\\
\leq
C C_{\phi}^{N+1} \int_{0}^{1} \int \int_{|\zeta| \leq \epsilon |\xi|} (1-s)^{N-1} \frac{1}{(N-1)!}
\sum_{k=0}^{N} \binom{N}{k} C_{\omega}^{k+1}
\\
\cdot
(N - k - 1)! |\xi + s \zeta|^{k} N^{N}  dz \dbar \zeta ds |\xi|^{\theta -
  N}
\\
\leq
C_{1}^{N+1} N! |\xi|^{\theta -N} .
\end{multline*}
Here we used the fact that, on the support of a derivative of $
\omega_{N} $, $ \xi + s \zeta $ is equivalent to $ N $, so that for
every $ k = 0, 1, \ldots, N $, $ (\xi + s \zeta)^{k} \leq C^{k} N^{k}
$.

Let us consider the region $ |\zeta| \geq \epsilon |\xi| $. We have
\begin{multline*}
\int \int_{|\zeta| \geq \epsilon |\xi|} \int_{0}^{1} e^{i z \zeta} (1-s)^{N-1} \frac{\zeta^{N}}{(N-1)!}
\left( \omega_{N}(N^{-1}\xi) \xi^{\theta}\right)^{(N)}(\xi + s \zeta) 
\\
\cdot 
\phi_{N}(x-z) ds dz \dbar \zeta 
\\
=
 \int \int_{|\zeta| \geq \epsilon |\xi|} \int_{0}^{1} e^{i z \zeta}  \frac{(1-s)^{N-1}}{|\zeta|^{N+2}(N-1)!}
\left( \omega_{N}(N^{-1}\xi) \xi^{\theta}\right)^{(N)}(\xi + s \zeta) 
\\
\cdot 
\phi_{N}^{(2N+2)}(x-z) ds dz \dbar \zeta
\end{multline*}
As before the r.h.s. of the above relation can be estimated by

\begin{multline*}
\int \int_{|\zeta| \geq \epsilon |\xi|} \int_{0}^{1} (1-s)^{N-1}
\frac{1}{|\zeta|^{N+2} (N-1)!}
\sum_{k=0}^{N} \binom{N}{k} N^{-k} 
\\
\cdot
| \omega_{N}^{(k)}(N^{-1}(\xi + s \zeta)) |
\theta | \theta -1| \cdots |\theta - N + k + 1| |\xi + s \zeta|^{\theta -
  N + k} 
\\
\cdot
|\phi_{N}^{(2N+2)}(x-z)| ds dz \dbar \zeta
\end{multline*}
We consider the above quantity in the cases $ 0 \leq k \leq
N-1$ and $ k = N $. In the latter case we observe that
\begin{multline*}
\int_{x - \supp \phi_{N}} \int_{\substack{|\zeta| \geq \epsilon |\xi|
    \\ N \leq |\xi + s \zeta| \leq 2 N }} \int_{0}^{1} (1-s)^{N-1}
\frac{1}{|\zeta|^{2} (N-1)!}
\frac{C_{\omega}^{N+1}}{\epsilon^{N}} 
\\
|\xi + s \zeta|^{\theta} \frac{1}{|\xi|^{N}}
C_{1,\phi}^{2N + 3} N^{2N} ds dz \dbar \zeta
\\
\leq
C_{1}^{N+1}  N^{N} ( N^{\theta} |\xi|^{-N} ) ,
\end{multline*}
where the factor $ |\zeta|^{-2} $ makes the $ \zeta $-integral
convergent and $ C_{1} $ is a suitable positive constant.

Let us consider the remaining cases: $ 0 \leq k \leq N-1 $:
\begin{multline*}
\int \int_{|\zeta| \geq \epsilon |\xi|} \int_{0}^{1} (1-s)^{N-1}
\frac{1}{|\zeta|^{N+2} (N-1)!}
\sum_{k=0}^{N-1} \binom{N}{k} N^{-k} 
\\
\cdot
| \omega_{N}^{(k)}(N^{-1}(\xi + s \zeta)) |
\theta | \theta -1| \cdots |\theta - N + k + 1| |\xi + s \zeta|^{\theta -
  N + k} 
\\
\cdot
|\phi_{N}^{(2N+2)}(x-z)| ds dz \dbar \zeta
\\
\leq
\int_{x - \supp \phi_{N}} \int_{\substack{|\zeta| \geq \epsilon |\xi|\\ \xi + s \zeta \geq
    N}} \int_{0}^{1}  
\frac{1}{|\zeta|^{2} (N-1)!}
\sum_{k=0}^{N-1} \binom{N}{k} C_{\omega}^{k+1}
\\
\cdot
(N - k - 1)! N^{\theta -
  N + k} 
C_{\phi}^{2N+3} N^{2N}  ds dz \dbar \zeta  (\epsilon^{-N} |\xi|^{-N})
\\
\leq
C_{2}^{N+1}  N^{N} ( N^{\theta} |\xi|^{-N} ) .
\end{multline*}
The two estimates in the regions $ |\zeta| \lesseqgtr \epsilon |\xi| $
imply that
$$ 
\sigma\left( [ \omega_{N}(N^{-1} D)  D^{\theta} , \phi_{N}] \right)
\chi_{N}(\xi) \xi^{N-\theta} = \sum_{k=1}^{N} a_{N, k}(x, \xi)
\chi_{N}(\xi) \xi^{N} ,
$$
where $ a_{N, k} $ satisfies the estimate \eqref{eq:aNk} when $ \alpha
= 0$. The estimate of the derivatives w.r.t. $ \xi $ in \eqref{eq:aNk}
are proved analogously. 
\end{proof}
As a byproduct of the proof of the above lemma, keeping in mind the
definition of the cutoff $ \omega_{N} $, $ \chi_{N} $, we have the 
\begin{corollary}
\label{cor:2}
For $ 1 \leq k \leq N-1 $ in \eqref{eq:aNk} we have that
\begin{multline}
\label{eq:aN-1k}
a_{N,k}(x, D) \chi_{N}(N^{-1}D) D^{N} 
\\
= \frac{\theta (\theta - 1) \cdots (\theta - k +1)}{k!} D^{k}_{x}\phi_{N}(x)
\chi_{N}(N^{-1}D) D^{N-k} .
\end{multline}
\end{corollary}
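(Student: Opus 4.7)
The plan is to extract this identity directly from the computation already performed in the proof of Lemma \ref{lemma:commut}. There the symbol of the commutator $[\omega_N(N^{-1}D)D^\theta,\phi_N]$ was expanded via the composition formula as a Taylor polynomial of degree $N-1$ in the momentum correction $\zeta$, plus an integral remainder. The explicit part of the expansion reads
$$
\sum_{k=1}^{N-1}\frac{i^{-k}}{k!}\,\partial_\xi^k\bigl(\omega_N(N^{-1}\xi)\xi^\theta\bigr)\,\phi_N^{(k)}(x),
$$
while the integral remainder is of order $-N$ and is absorbed into $a_{N,N}(x,\xi)$; it therefore plays no role for the range $1\le k\le N-1$ addressed by the corollary.

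The first step is to differentiate $\omega_N(N^{-1}\xi)\xi^\theta$ by Leibniz:
$$
\partial_\xi^k\bigl(\omega_N(N^{-1}\xi)\xi^\theta\bigr)=\sum_{j=0}^k\binom{k}{j}N^{-j}\omega_N^{(j)}(N^{-1}\xi)\,\theta(\theta-1)\cdots(\theta-k+j+1)\,\xi^{\theta-k+j}.
$$
Now I would invoke the key vanishing property of the Ehrenpreis cutoffs: since $\omega_N\equiv 1$ on $\{y>2\}$ whereas $\mathrm{supp}\,\chi_N\subset\{y\ge 3\}$, one has $\omega_N^{(j)}(N^{-1}\xi)\chi_N(N^{-1}\xi)\equiv 0$ for every $j\ge 1$, while $\omega_N(N^{-1}\xi)\chi_N(N^{-1}\xi)=\chi_N(N^{-1}\xi)$. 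Consequently, after multiplying on the right by the factor $\chi_N(N^{-1}\xi)\xi^{N-\theta}$ that produces the $a_{N,k}(x,\xi)\chi_N(N^{-1}\xi)\xi^N$ decomposition of Lemma \ref{lemma:commut}, only the $j=0$ term in each inner Leibniz sum survives.

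Collecting the surviving contributions yields, for $1\le k\le N-1$,
$$
a_{N,k}(x,\xi)\chi_N(N^{-1}\xi)\xi^N=\frac{i^{-k}\theta(\theta-1)\cdots(\theta-k+1)}{k!}\,\phi_N^{(k)}(x)\,\chi_N(N^{-1}\xi)\,\xi^{N-k}.
$$
Using $i^{-k}=(-i)^k$ and the identity $(-i)^k\phi_N^{(k)}(x)=D_x^k\phi_N(x)$, and then lifting the symbol identity to operators (legitimate because the $x$-dependent factor $D_x^k\phi_N(x)$ commutes freely with $\chi_N(N^{-1}D)$ and $D^{N-k}$ as symbols in $\xi$), one recovers exactly the formula \eqref{eq:aN-1k}.

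There is essentially no obstacle to overcome: the corollary is a bookkeeping refinement of Lemma \ref{lemma:commut}. The only point requiring attention is applying the vanishing of $\omega_N^{(j)}\chi_N$ for $j\ge 1$ at the correct stage, namely after multiplication by $\chi_N(N^{-1}\xi)$, so that the Leibniz sum indeed collapses to its $j=0$ summand and the explicit closed form emerges.
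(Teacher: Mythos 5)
Your proposal is correct and is essentially identical to the paper's argument: the corollary is presented there as a byproduct of the proof of Lemma \ref{lemma:commut}, obtained precisely by the Leibniz expansion of $\partial_\xi^k\bigl(\omega_N(N^{-1}\xi)\xi^\theta\bigr)$ together with the observations $\omega_N^{(j)}\chi_N=0$ for $j\ge 1$ and $\omega_N\chi_N=\chi_N$, which collapse the inner sum to its $j=0$ term.
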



\begin{thebibliography}{100}
%
%
%
\bibitem{abc}
P. Albano, A. Bove and G. Chinni, {\it Minimal Microlocal Gevrey
Regularity for ``Sums of Squares''\/}, International Mathematics Research
Notices, Vol. 2009, No. {\bf 12}, 2275-2302.
%
\bibitem{AB} 
P. Albano and A. Bove, 
{\it  Wave front set of solutions to sums of squares of vector fields\/},
Mem. Amer. Math. Soc., {\bf 221} (2013), no. 1039.
%
\bibitem{ab-jam}
P. Albano and A. Bove, 
{\it The Presence of Symplectic Strata Improves the Gevrey Regularity
  for Sums of Squares\/},
Journal d'Analyse Math\'ematique, to appear.
%
\bibitem{bg72}
M. S. Baouendi and Ch. Goulaouic, 
{\it Nonanalytic-hypoellipticity for some degenerate op-
erators\/}, Bull. A. M. S., {\bf 78} (1972), 483-486.
%
\bibitem{bm}
A. Bove and M. Mughetti
{\it On the Analytic Hypoellipticity of Sums of Squares Whose
  Poisson--Treves Stratification Has a Single Symplectic Stratum. I
  and II.\/ } In preparation.
%
\bibitem{berezin}
F.A.  Berezin and M.A. Shubin, 
{\it The Schr\"odinger equation\/},
Mathematics and its Applications (Soviet Series), 66. 
Kluwer Academic Publishers Group, Dordrecht, 1991
%
\bibitem{bmt}
A. Bove, M. Mughetti and D. S. Tartakoff, 
{\it Hypoellipticity and Non Hypoellipticity for Sums of Squares of
  Complex Vector Fields\/},
Analysis \& PDE {\bf 6} (2013), no. 2, 371--446.
%
\bibitem{BT}
A. Bove and D. S. Tartakoff,
{\it Optimal non-isotropic Gevrey exponents for sums of squares of
  vector fields},
Comm. Partial Differential Equations  {\bf 22} (1997),  no. 7--8,
1263--1282.
%
\bibitem{btreves}
A. Bove and F. Treves,
{\it On the Gevrey hypo-ellipticity of sums of squares of vector
  fields\/},  Ann. Inst. Fourier (Grenoble) {\bf 54}(2004),
1443-1475.
%
\bibitem{chanillo-helffer-laptev}
S. Chanillo, B. Helffer and A. Laptev,
{\it Nonlinear eigenvalues and analytic hypoellipticity\/},
J. Funct. Anal. {\bf 209} (2004), no. 2, 425--443.
%
\bibitem{christ}
M. Christ, 
{\it Certain sums of squares of vector fields fail to be analytic
  hypoelliptic\/}, 
Comm. Partial Differential Equations {\bf 16} (1991), 1695--1707.
%
\bibitem{ch09}
P. D. Cordaro and N. Hanges, 
{\it A New Proof of Okaji's Theorem for a Class of Sum of Squares
  Operators\/}, 
Ann. Inst. Fourier (Grenoble) {\bf 59} (2009), no. 2, 595--619.
%
\bibitem{costin-costin}
O. Costin and R. D. Costin,
{\it Failure of analytic hypoellipticity in a class of differential
  operators\/},
Ann. Sc. Norm. Super. Pisa Cl. Sci. (5) {\bf 2} (2003), no. 1,
21--45.
%
\bibitem{derridj71}
M. Derridj,
{\it Un probl\`eme aux limites pour une classe d'op\'erateurs
du second ordre hypoelliptiques\/}, 
Ann. Inst. Fourier (Grenoble) {\bf 21} (1971), no. 4, 99--148.
%
\bibitem{grigis-sjostrand}
A. Grigis and J. Sj\"ostrand,
{\it Front d'onde analytique et sommes de carr\'es de champs de vecteurs\/},
Duke Math. J. {\bf 52} (1985), no. 1, 35--51. 
%
\bibitem{grushin}
V. V. Gru\v sin, 
{\it On a class of elliptic pseudodifferential operators degenerating
  at a submanifold\/},
Mat. Sbornik {\bf 84 (2)} (1971), 163--195. 
%
\bibitem{hh}
N. Hanges and A. A. Himonas, 
{\it Singular solutions for sums of squares of vector fields\/}, 
Comm. in Partial Differential Equations {\bf 16} (1991), 1503--1511.
%
\bibitem{hh98}
N. Hanges and A. A. Himonas, 
{\it Non-analytic hypoellipticity in the presence of symplecticity\/}, 
Proc. A. M. S. {\bf 126 (2)} (1998),  405--409.
%
\bibitem{hormander67}
L. H\"ormander,
{\it Hypoelliptic second order differential equations\/},
Acta Math. {\bf 119} (1967), 147--171.
%
\bibitem{horm71}
L. H\"ormander, {\it Uniqueness Theorems and Wave Front Sets for
  Solutions of Linear Differential Equations with Analytic
  Coefficients\/},
Communications Pure Appl. Math. {\bf 24} (1971), 671--704.
%
\bibitem{horm1}
L. H\"ormander, {\it The Analysis of Partial Differential Operators,
  I}, Springer Verlag, 1985.
%
\bibitem{horm3}
L. H\"ormander, {\it The Analysis of Partial Differential Operators,
  III}, Springer Verlag, 1985.
%
\bibitem{hoshiro}
T. Hoshiro, 
{\it Failure of analytic hypoellipticity for some operators of
  $X^{2}+Y^{2}$ type\/},
J. Math. Kyoto Univ. {\bf 35} (1995), no. 4, 569--581. 
%
\bibitem{metivier80}
G. M\'etivier, 
{\it Une Classe d'Op\'erateurs Non Hypoelliptiques Analytiques\/},
Indiana Univ. Math. J. {\bf 29} (1980), 823--860.
%
\bibitem{metivier81}
G. M\'etivier, {\it Non-hypoellipticit\'e Analytique pour $ D_{x}^{2}
  + (x^{2} + y^{2}) D_{y}^{2} $\/},
C. R. Acad. Sci. Paris S\'er. I Math. {\bf 292} (1981), no. 7, 401--404.
%
\bibitem{metivier-ahe}
G. M\'etivier, {\it Analytic hypoellipticity for operators with
  multiple characteristics\/}, 
Comm. Partial Differential Equations {\bf 6} (1981), no. 1, 1--90. 
%
\bibitem{mughetti01}
M. Mughetti,
{\it A problem of transversal anisotropic ellipticity\/},
Rend. Sem. Mat. Univ. Padova {\bf 106} (2001), 111--142.
%
%
\bibitem{mughetti02}
M. Mughetti,
{\it On the spectrum of an anharmonic oscillator\/},
Trans. Amer. Math. Soc. {\bf 367} (2015), 835--865.
%
\bibitem{okaji}
T. \=Okaji, {\it Analytic hypoellipticity for operators with symplectic
  characteristics\/}, 
J. Math. Kyoto Univ. {\bf 25} (1985), no. 3, 489--514.
%
\bibitem{o72}
O. A. Ole\u \i nik, 
{\it On the analyticity of solutions of partial differential equations
  and systems\/}, 
Colloque International CNRS sur les \'Equations
aux D\'eriv\'ees Partielles Lin\'eaires (Univ. Paris-
Sud, Orsay, 1972), 272--285. Ast\'erisque, 2 et 3. Societ\'e
Math\'ematique de France, Paris, 1973. 
%
\bibitem{ol-rad-73}
O. A. Ole\u \i nik and E. V. Radkevi\v c,
{\it The analyticity of the solutions of linear partial differential
  equations\/}, 
(Russian) Mat. Sb. (N.S.), {\bf 90(132)} (1973), 592--606.
%
\bibitem{simon_83}
B. Simon,
{\it Semiclassical analysis of low lying eigenvalues. I.
Non-degenerate minima: asymptotic expansions,\/} 
Annales de l'I. H. P., section A, {\bf 38}(3) (1983), 295--308.
%
\bibitem{simon_83err}
B. Simon,
{\it ERRATA: Semiclassical analysis of low lying eigenvalues. I.
Non-degenerate minima: asymptotic expansions,\/} 
Annales de l'I. H. P., section A, {\bf 40}(2) (1984), 224.
%
\bibitem{Sj-83} 
J. Sj\"ostrand, 
{\it Analytic wavefront set and operators with multiple
  characteristics\/}, 
Hokkaido Math. J. {\bf 12}(1983), 392--433.
%
\bibitem{Sj-Ast} 
J. Sj\"ostrand, 
{\it Singularit\'es analytiques microlocales\/}, Ast\'erisque {\bf 95} (1982).
%
\bibitem{Tartakoff1980}
D.S. Tartakoff,
{\it On the Local Real Analyticity of Solutions to $\Box_{b}$ and
the $\bar\partial$-Neumann Problem\/},   
Acta Math. {\bf 145}  (1980), 117--204.
%
\bibitem{treves78}
F.  Tr\`eves, {\it Analytic hypo-ellipticity of a class of
  pseudodifferential operators with double characteristics and
  applications to the $ \bar{\partial} $-Neumann problem\/}, 
Comm. Partial Differential Equations {\bf 3} (1978), no. 6-7, 475--642.
%
\bibitem{Treves}
F. Treves,
{\it Symplectic geometry and analytic hypo-ellipticity, in
  Differential equations,\/} La Pietra 1996 (Florence),
Proc. Sympos. Pure Math. {\bf 65}, Amer. Math. Soc., Providence, RI,
1999, 201-219.
%
%
\bibitem{trevespienza}
F. Treves, {\it On the analyticity of solutions of sums of
  squares of vector fields\/},  Phase space analysis of partial
differential equations, Bove, Colombini, Del Santo ed.'s, 315-329,
Progr. Nonlinear Differential Equations Appl., {\bf 69}, 
Birkh\"auser Boston, Boston, MA, 2006.
%
\bibitem{treves-book}
F. Treves,
{\it Aspects of Analytic PDE\/}, book in preparation.
\bibitem{zworski}
M. Zworski,
{\it Semiclassical Analysis,\/} Graduate Studies in Mathematics,
Vol. 138, American Mathematical Society, Providence, R. I. 2012.
%
%
\end{thebibliography}
\end{document}